\def\Diag{\text{Diag}}
\def\Mat{\text{M}}
\def\GL{\text{GL}}
\def\SL{\text{SL}}
\def\bfB{\mathbf{B}}
\def\card{\#\,}
\newcommand{\Ker}{\operatorname{Ker}}
\newcommand{\Com}{\operatorname{Com}}
\newcommand{\Vect}{\operatorname{span}}
\newcommand{\im}{\operatorname{Im}}
\newcommand{\ad}{\operatorname{ad}}
\newcommand{\tr}{\operatorname{tr}}
\newcommand{\rk}{\operatorname{rk}}
\newcommand{\codim}{\operatorname{codim}}
\renewcommand{\setminus}{\smallsetminus}
\def\K{\mathbb{K}}
\def\N{\mathbb{N}}
\renewcommand{\L}{\mathbb{L}}
\def\calA{\mathcal{A}}
\def\calB{\mathcal{B}}
\def\calG{\mathcal{G}}
\def\calH{\mathcal{H}}
\def\calL{\text{End}}
\def\calP{\mathcal{P}}
\def\calV{\mathcal{V}}
\def\calW{\mathcal{W}}
\def\lcro{\mathopen{[\![}}
\def\rcro{\mathclose{]\!]}}
\theoremstyle{definition}
\newtheorem{Def}{Definition}
\newtheorem{Not}[Def]{Notation}
\theoremstyle{plain}
\newtheorem{theo}{Theorem}
\newtheorem{prop}[theo]{Proposition}
\newtheorem{lemme}[theo]{Lemma}
\theoremstyle{plain}
\theoremstyle{remark}
\newtheorem{Rems}{Remarks}
\newtheorem{Rem}[Rems]{Remark}
\title{To what extent is a large space of matrices not closed under the product?}
\author{Cl\'ement de Seguins Pazzis
\footnote{Professor of Mathematics at Lyc\'ee Priv\'e Sainte-Genevi\`eve, 2, rue
de l'\'Ecole des Postes, 78029 Versailles Cedex, FRANCE.}
\footnote{e-mail address: dsp.prof@gmail.com}}
\begin{document}

\thispagestyle{plain}
\maketitle
\begin{abstract}
Let $\K$ denote a field. Given an arbitrary linear subspace $V$ of $\Mat_n(\K)$ of codimension lesser than $n-1$,
a classical result states that $V$ generates the $\K$-algebra $\Mat_n(\K)$.
Here, we strengthen this statement in three ways:
we show that $\Mat_n(\K)$ is spanned by the products of the form $AB$ with $(A,B)\in V^2$;
we prove that every matrix in $\Mat_n(\K)$ can be decomposed into a product of matrices of $V$;
finally, when $V$ is a linear hyperplane of $\Mat_n(\K)$ and $n>2$, we show that every matrix in $\Mat_n(\K)$ is a product of two elements of $V$.
\end{abstract}

\vskip 2mm
\noindent
\emph{AMS Classification:} 15A30, 15A23, 15A03.

\vskip 2mm
\noindent
\emph{Keywords:} decompositions, linear subspaces, dimension, matrices, semigroups.

\section{Introduction}

In this paper, $\K$ denotes an arbitrary field, $n$ a positive integer and
$\Mat_n(\K)$ the algebra of square matrices of order $n$ with coefficients in $\K$.
For $(p,q)\in \N^2$, we also denote by $\Mat_{p,q}(\K)$ the vector space of matrices with $p$ rows, $q$
columns and entries in $\K$. For $(i,j)\in \lcro 1,n\rcro \times \lcro 1,p\rcro$, we let $E_{i,j}$ denote the elementary matrix
of $\Mat_{n,p}(\K)$ with entry $1$ at the $(i,j)$ spot and zero elsewhere.
We set $\frak{sl}_n(\K):=\bigl\{M \in \Mat_n(\K) : \; \tr M=0\bigr\}$.
The standard lie bracket on $\Mat_n(\K)$ will be written $[-,-]$.
We equip $\Mat_n(\K)$ with the non-degenerate symmetric bilinear map $b : (A,B) \mapsto \tr(AB)$.
Given a subset $\calA$ of $\Mat_n(\K)$, its orthogonal subspace for $b$ will be written $\calA^\bot$.

\vskip 2mm
\noindent Given a vector space $E$ over $\K$, we let $\calL(E)$ denote the ring of linear endomorphisms on $E$,
and, if $E$ is finite-dimensional, we also write $\frak{sl}(E):=\bigl\{u \in \calL(E) : \; \tr(u)=0\bigr\}$.

\vskip 3mm
Here, we will deal with linear subspaces of $\Mat_n(\K)$ with a small \emph{codimension}
in $\Mat_n(\K)$ and some properties they share related to the product of matrices.
Our starting point is a result that is well-known to specialists of representations of algebras:
a strict subalgebra of $\Mat_n(\K)$ must have a codimension greater than or equal to $n-1$.
Here is a proof using a theorem of Burnside:

\begin{proof}
Let $\calA$ be a strict subalgebra of $\Mat_n(\K)$. Choose an algebraic closure $\mathbb{L}$ of $\K$.
Then $\calA_{\mathbb{L}}:=\calA \otimes_\K \mathbb{L}$ is an $\mathbb{L}$-subalgebra
of $\Mat_n(\K) \otimes_\K \mathbb{L}$.
Moreover $\dim_{\mathbb{L}} \calA_{\mathbb{L}}=\dim_\K \calA_\K$. Hence
$\calA_{\mathbb{L}}$ is a strict subalgebra of $\Mat_n(\K) \otimes_\K \mathbb{L} \simeq \Mat_n(\mathbb{L})$, hence Burnside's theorem (see \cite{RadjRosen} Theorem 1.2.2 p.4) shows that $\L^n$ is not a simple $\calA_{\mathbb{L}}$-module.
It follows that we may find a linear embedding of $\calA_{\mathbb{L}}$ into the space of matrices of the form
$$\begin{bmatrix}
A & B \\
0 & C
\end{bmatrix} \quad \text{with $A \in \Mat_p(\mathbb{L})$, $B \in \Mat_{p,n-p}(\mathbb{L})$ and $C \in \Mat_{n-p}(\mathbb{L})$,}$$
hence $\codim_{\Mat_n(\mathbb{L})} \calA_{\mathbb{L}} \geq p(n-p) \geq n-1$.
\end{proof}

As a consequence, if a linear subspace $V$ of $\Mat_n(\K)$ has codimension lesser than $n-1$,
then it is not closed under the matrix product, and, better still, $V$ generates the $\K$-algebra $\Mat_n(\K)$.
In the present paper, we aim at strengthening this result in various ways.

\begin{Not}
Given a subset $V$ of $\Mat_n(\K)$, we set
$$V^{(2)}:=\bigl\{AB\mid (A,B)\in V^2\bigr\} \quad \text{and} \quad
V^{(\infty)}:=\bigl\{A_1A_2\cdots A_p\mid p \in \N, \; (A_1,\dots,A_p)\in V^p\bigr\}$$
i.e. $V^{(\infty)}$ is the \emph{sub-semigroup} of $\bigl(\Mat_n(\K),\times\bigr)$ generated by $V$.
\end{Not}

\begin{theo}\label{LC2}
Let $V$ be a linear subspace of $\Mat_n(\K)$ such that $\codim V<n-1$. \\
Then every matrix of $\Mat_n(\K)$ is a sum of matrices in $V^{(2)}$.
\end{theo}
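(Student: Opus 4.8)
The plan is to pass to the orthogonal complement for the non-degenerate form $b(A,B)=\tr(AB)$. Because $V^{(2)}$ is a cone (stable under scalar multiplication: $\lambda(AB)=(\lambda A)B$), the set of sums of elements of $V^{(2)}$ is exactly $\Vect(V^{(2)})$, so the statement is equivalent to $\Vect(V^{(2)})=\Mat_n(\K)$, i.e. to $N:=\bigl(V^{(2)}\bigr)^{\bot}=\{0\}$, where $N=\{C\in\Mat_n(\K):\tr(ABC)=0\text{ for all }(A,B)\in V^2\}$. Put $c:=\codim V\le n-2$, so $\dim V^{\bot}=c$. Two elementary facts do most of the work. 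First, cyclicity of the trace turns "$C\in N$" into "$\tr\bigl(A(BC)\bigr)=0$ and $\tr\bigl((CA)B\bigr)=0$ for all $A,B\in V$", that is $VC\subseteq V^{\bot}$ and $CV\subseteq V^{\bot}$. Second, if $C\in N$ has rank $r$ then $\{B:BC=0\}$ has dimension $n(n-r)$, hence $\dim(VC)\ge\dim V-n(n-r)=nr-c$; since $VC\subseteq V^{\bot}$ this forces $nr-c\le c$, so $r\le 2c/n<2$ and $r\le 1$. Thus every nonzero element of $N$ has rank $1$.

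Next I would invoke the classical description of linear subspaces of $\Mat_n(\K)$ all of whose nonzero elements have rank $1$: such a space is contained either in $\{uy^{\top}:y\in\K^n\}$ for a fixed $u\ne 0$, or in $\{xv^{\top}:x\in\K^n\}$ for a fixed $v\ne 0$. Transposition preserves all hypotheses, since $\codim V^{\top}=c$ and $\bigl(V^{\top}\bigr)^{(2)}=\bigl(V^{(2)}\bigr)^{\top}$, so I may assume we are in the first case: $N=\{uy^{\top}:y^{\top}\in Y\}$ for a subspace $Y$ of $(\K^n)^{*}$. Assume $N\ne\{0\}$ and fix $v^{\top}\in Y\setminus\{0\}$, $C:=uv^{\top}$. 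Set $P:=Vu=\{Bu:B\in V\}$. Since $c<n$, the map $B\mapsto Bu$ shows $\dim P\ge n-c\ge 2$, and likewise $\dim(v^{\top}V)\ge n-c$. Moreover $VC=\{xv^{\top}:x\in P\}$ has dimension $\dim P$, while $CV=\{uy^{\top}:y^{\top}\in v^{\top}V\}$ has dimension $\dim(v^{\top}V)$, and both subspaces lie in the $c$-dimensional space $V^{\bot}$.

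Now comes the counting. Any matrix lying in both $VC$ (row space inside $\Vect v^{\top}$) and $CV$ (image inside $\Vect u$) is a scalar multiple of $uv^{\top}$, so $\dim(VC\cap CV)\le 1$; also, if $\dim P=c$ then $VC=V^{\bot}\supseteq CV$, forcing $v^{\top}V\subseteq\Vect v^{\top}$ and contradicting $\dim(v^{\top}V)\ge 2$. Hence $2(n-c)-1\le\dim(VC)+\dim(CV)-1\le\dim(VC+CV)\le c$, i.e. $3c\ge 2n-1$, which already proves the theorem for $n\le 4$. For $n\ge 5$ one would bootstrap: the same two facts, applied to the subspace $W:=\Vect(V^{(2)})$ (still of codimension $<n-1$), show that $\bigl(W^{(2)}\bigr)^{\bot}$ is again a space of rank-one matrices and give the recursive estimate $\codim\Vect\!\bigl(W^{(2)}\bigr)\le\dfrac{\codim W}{\,n-\codim W\,}$; iterating, the codimensions of $\Vect(V^{(2)}),\ \Vect\!\bigl(\Vect(V^{(2)})^{(2)}\bigr),\dots$ drop strictly to $0$.

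The step I expect to be the main obstacle is exactly the remaining window $\lceil n/2\rceil\le\codim V\le n-2$, which the crude estimates reach only up to $\codim V<(2n-1)/3$, and the conversion of the bootstrap (which a priori only yields $\Mat_n(\K)$ for an iterated product) back to a statement about $\Vect(V^{(2)})$ itself. Closing this gap should require using the rank-one structure of $N$ much more tightly — playing off several generators of $Y$ at once, and the precise interaction of the subspaces $Vu$, $v^{\top}V$ and $V^{\bot}=(V^{\bot})^{\bot}$ (e.g. computing $\dim(Vu)$ from the defining equations of $V$ and contradicting $\dim(Vu)\le c$) — together with the observation that in the critical subcase one has $V\subseteq\Vect(V^{(2)})$: indeed $u\in Vu$ yields $N\subseteq V^{\bot}$, which makes the chain $\Vect(V)\subseteq\Vect(V^{(2)})\subseteq\Vect(V^{(3)})\subseteq\cdots$ increasing and lets the analysis collapse onto its second term.
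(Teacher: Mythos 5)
Your reduction to $N:=\bigl(V^{(2)}\bigr)^{\bot}$ and the conclusion that every nonzero element of $N$ has rank one are correct, but the proof is not complete: the counting argument with $VC$ and $CV$ only yields $3\,\codim V\ge 2n-1$, which contradicts $\codim V\le n-2$ precisely when $n\le 4$, as you acknowledge. The proposed repair for $n\ge 5$ does not work as stated, for two reasons. First, the ``recursive estimate'' $\codim\Vect\bigl(W^{(2)}\bigr)\le \codim W/(n-\codim W)$ is asserted without proof, and you do not even know at the outset that $W:=\Vect\bigl(V^{(2)}\bigr)$ has codimension $<n-1$ --- that is essentially what is to be proved. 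Second, and more fundamentally, $\Vect\bigl(W^{(2)}\bigr)$ is contained in $\Vect\bigl(V^{(4)}\bigr)$, not in $\Vect\bigl(V^{(2)}\bigr)$, so even a successful bootstrap would only show that the span of some iterated product set equals $\Mat_n(\K)$ --- a statement closer to Theorem \ref{prodall} than to Theorem \ref{LC2}. You flag this conversion problem yourself but do not resolve it, so the whole range $\lceil (2n-1)/3\rceil\le\codim V\le n-2$ remains open.

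The missing idea is to replace products by commutators. The paper first proves that $\frak{sl}_n(\K)=\Vect\bigl\{[A,B]:(A,B)\in V^2\bigr\}$: if $A$ is orthogonal to all these commutators, cyclicity of the trace shows that $\ad_A=[A,-]$ maps $V$ into $V^{\bot}$, whence $\codim\Ker\ad_A\le 2\,\codim V<2n-2$; on the other hand, the Frobenius formula for the dimension of the centralizer shows that every non-scalar matrix satisfies $\codim\Ker\ad_A\ge 2n-2$, so $A$ must be scalar. This is exactly where your approach loses ground: for a rank-one $C$ the map $B\mapsto BC$ has an enormous kernel, so the rank--nullity count falls short of a contradiction, whereas for a non-scalar $A$ the kernel of $\ad_A$ is small enough that the same count closes for every $n$. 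Once $\frak{sl}_n(\K)\subseteq\Vect V^{(2)}$ is known, equality with $\Mat_n(\K)$ follows because $\Vect V^{(2)}\subseteq\frak{sl}_n(\K)$ would force $\tr(AB)=0$ for all $A,B\in V$, i.e.\ $V\subseteq V^{\bot}$ and hence $\codim V\ge n^2/2$.
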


\noindent
Notice that $$W_1:=\biggl\{\begin{bmatrix}
\alpha & M \\
0 & L
\end{bmatrix} \mid M \in \Mat_{n-1}(\K), \, L \in \Mat_{1,n-1}(\K), \alpha \in \K\biggr\}$$
is a subalgebra of codimension $n-1$ hence the upper bound in Theorem \ref{LC2} is tight.

\begin{theo}\label{prodall}
Let $V$ be a linear subspace of $\Mat_n(\K)$ such that $\codim V<n-1$. \\
Then $V$ generates the semigroup $\bigl(\Mat_n(\K),\times\bigr)$, i.e. $\Mat_n(\K)=V^{(\infty)}$.
\end{theo}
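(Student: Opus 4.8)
The plan is to reduce Theorem~\ref{prodall} to a statement about invertible matrices and then to exhaust the invertible matrices by exploiting that $V$ has small codimension. The backbone is the following elementary observation on the semigroup $(\Mat_n(\K),\times)$: for any matrix $E$ of rank $n-1$, the subsemigroup generated by $\GL_n(\K)\cup\{E\}$ is all of $\Mat_n(\K)$. Indeed every rank-$(n-1)$ matrix has the form $PEQ$ with $P,Q$ invertible (all matrices of rank $n-1$ are equivalent), while for $0\le r\le n-1$ the matrix $\Diag(I_r,0)$ is a product of $n-r$ diagonal matrices with diagonal entries in $\{0,1\}$ each having exactly one zero on the diagonal, hence of rank $n-1$; feeding this into the rank decomposition $M=P\,\Diag(I_r,0)\,Q$ puts every matrix in the subsemigroup. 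Hence it suffices to prove that (a) $\GL_n(\K)\subseteq V^{(\infty)}$, and (b) $V^{(\infty)}$ contains at least one matrix of rank $n-1$.

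Claim (b) is settled by a dimension count. Since $\codim V\le n-2$, we have $\dim V\ge n^2-n+2$. Fix a nonzero column $x$ and set $W:=\{A\in V:\ Ax=0\}$, so that $\dim W\ge\dim V-n\ge n^2-2n+2$. If every element of $W$ had rank $\le n-2$, then the classical bound on the dimension of a space of matrices of bounded rank would force $\dim W\le n(n-2)=n^2-2n$, a contradiction; hence $W$, and therefore $V\subseteq V^{(\infty)}$, contains a matrix of rank exactly $n-1$. The same kind of comparison, now with Dieudonn\'e's bound $n^2-n$ for the dimension of a linear space of singular matrices, shows that $V$ also contains an invertible matrix, which will be used below.

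The heart of the matter is (a). Since a product of matrices is invertible only when every factor is, $V^{(\infty)}\cap\GL_n(\K)$ coincides with the subsemigroup of $\GL_n(\K)$ generated by $V\cap\GL_n(\K)$; thus (a) asserts precisely that every invertible matrix is a product of invertible elements of $V$. Now $V\cap\GL_n(\K)$ is stable under multiplication by nonzero scalars, and $\GL_n(\K)$ is generated --- as a group, hence (the chosen generators being closed under inversion) as a semigroup --- by the transvections $I_n+\lambda E_{i,j}$ ($i\ne j$) together with the dilations $\Diag(\mu,1,\dots,1)$; so it is enough to place each such elementary matrix $T$ in $V^{(\infty)}$. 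The obvious attempt is to find an invertible $Q\in V$ with $Q^{-1}T\in V$, whence $T=Q\,(Q^{-1}T)\in V^{(2)}$; the existence of such a $Q$ is an intersection statement between $V\cap\GL_n(\K)$ and another $(n^2-\codim V)$-dimensional subset of $\GL_n(\K)$, and the inequality $\codim V\le n-2$ makes the expected dimension of that intersection positive.

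I expect this last point to be the main obstacle. A merely "large" subset of $\GL_n(\K)$ need not generate it as a semigroup --- over $\R$, the matrices of positive determinant form such a subset --- so the linear structure of $V$ must genuinely be used; and since what matters are $\K$-rational products, the relevant genericity arguments cannot just be inferred over $\overline{\K}$ but have to be run over $\K$ itself, presumably through an ad hoc linear-algebra lemma and perhaps a separate look at small finite fields. A possibly cleaner route would loop back through Theorem~\ref{LC2}: since $V^{(2)}=\bigcup_{A\in V}AV$ is a union of linear subspaces, each of codimension $\codim V$ when $A$ is invertible, and these span $\Mat_n(\K)$ by Theorem~\ref{LC2}, one might build the elementary generators --- or enough invertible matrices --- directly inside this union and its iterated products.
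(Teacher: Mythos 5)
Your reduction to invertible matrices is sound and essentially matches the paper's Section on singular matrices: once $\GL_n(\K)\subset V^{(\infty)}$ and $V^{(\infty)}$ contains one rank-$(n-1)$ matrix, equivalence of matrices of equal rank and the factorization $J_r=\prod_{k=r+1}^{n}(I_n-E_{k,k})$ finish the singular case, and your Flanders--Meshulam count locating a rank-$(n-1)$ element of $V$ is correct. But the heart of the theorem, your claim (a), is not proved: you reduce it to finding, for each elementary generator $T$, an invertible $Q\in V$ with $Q^{-1}T\in V$, and you justify this only by saying the ``expected dimension'' of the relevant intersection is positive. The set $\{Q\in\GL_n(\K):\,Q^{-1}T\in V\}$ is the image of an affine piece of a linear subspace under matrix inversion, not a linear subspace, so no rank-theorem count applies to its intersection with $V$; and over a general field $\K$ (in particular a finite field) a positive expected dimension guarantees nothing. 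You acknowledge this yourself, which means the central step of the argument is an open hope rather than a proof. It is worth noting that the paper's Proposition~\ref{inverse} proves exactly such an ``invertible $P\in H_1$ with $P^{-1}\in H_2$'' statement, but only for hyperplanes, and its proof is already delicate; nothing of the sort is available off the shelf for codimension up to $n-2$.

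The paper takes a different route that you may want to internalize: it proves the stronger affine statement (Theorem~\ref{prodallaffine}) by induction on $n$. Given invertible $M$, one finds $N\in\calV\cap\GL_n(\K)$ sharing its first column, so that $N^{-1}M=\begin{bmatrix}1&*\\0&P\end{bmatrix}$; after arranging (by conjugation) that the first-column map on $\calV$ is onto and that the lower-right block space $K(\calW)\subset\Mat_{n-1}(\K)$ has codimension $<n-2$, the induction hypothesis factors $P$ inside $K(\calW)$, and a separate argument using the nonzero space $L(H)$ of admissible top rows absorbs the resulting discrepancy $\begin{bmatrix}1&L-L'\\0&I_{n-1}\end{bmatrix}$. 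The single configuration where this setup fails ($n=3$, $\calV$ a trace hyperplane) is handled by explicit products of transvection-like matrices, with a case split on $\card\K$. Your closing suggestion of looping back through Theorem~\ref{LC2} does not obviously help either: knowing that $V^{(2)}$ spans $\Mat_n(\K)$ linearly says nothing about which matrices it contains multiplicatively.
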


\noindent Again, the case of $W_1$ above shows that the upper bound $n-1$ is tight.

\begin{theo}\label{prod2}
Assume $n \geq 3$ and let $V$ be a (linear) hyperplane of $\Mat_n(\K)$. \\
Then $\Mat_n(\K)=V^{(2)}$.
\end{theo}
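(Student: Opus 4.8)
The plan is to reduce the whole statement to one existence assertion about the traces of an invertible matrix and of its inverse. Since $b$ is non-degenerate, write $V=\{A\}^\bot$ with $A\in\Mat_n(\K)\setminus\{0\}$. Given $B\in\Mat_n(\K)$, suppose we can find $M\in\GL_n(\K)$ with $\tr(AM)=0$ and $\tr(BAM^{-1})=0$. Then $N:=M^{-1}B$ satisfies $\tr(AN)=\tr(M^{-1}BA)=\tr(BAM^{-1})=0$, so $M,N\in V$ and $MN=B$; thus it suffices to produce such an $M$ for every $B$. If $BA=0$ this is immediate, since the second condition becomes vacuous and every linear hyperplane of $\Mat_n(\K)$ contains an invertible matrix (the maximal dimension of a linear subspace of singular matrices being $n^2-n<n^2-1$). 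So everything reduces to the following claim: \emph{for $n\geq 3$, for every nonzero $A\in\Mat_n(\K)$ and every $B\in\Mat_n(\K)$ with $BA\neq 0$, there is $M\in\GL_n(\K)$ such that $\tr(AM)=0$ and $\tr(BAM^{-1})=0$.}

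Set $C:=BA\neq 0$; I would split according to whether $\rk A\geq 2$ or $\rk A=1$. Suppose first $\rk A\geq 2$. Choose $M_0\in\GL_n(\K)$ with $\tr(AM_0)=0$; if $\tr(CM_0^{-1})=0$ already, take $M=M_0$, so assume $\gamma:=\tr(CM_0^{-1})\neq 0$, put $Q:=M_0-\gamma^{-1}C$, and look for $M$ of the rank-one-update form $M=M_0+t\,M_0u'v'^{\top}M_0$ with $t\in\K$ and $u',v'\in\K^n$. The Sherman--Morrison formulas give $\det M=\det(M_0)\,(1+t\,v'^{\top}M_0u')$ together with
\begin{align*}
\tr(AM)&=\tr(AM_0)+t\,v'^{\top}(M_0AM_0)u',\\
\tr(CM^{-1})&=\gamma-\frac{t\,v'^{\top}Cu'}{1+t\,v'^{\top}M_0u'}\,.
\end{align*}
Consequently, if $(u',v')$ is chosen with $v'^{\top}(M_0AM_0)u'=0$, $v'^{\top}Cu'\neq 0$ and $v'^{\top}Qu'\neq 0$, then $t:=-(v'^{\top}Qu')^{-1}$ yields, after a short verification, an $M\in\GL_n(\K)\cap V$ with $\tr(CM^{-1})=0$.

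The existence of such a pair $(u',v')$ is where $\rk A\geq 2$ is used: the bidegree-$(1,1)$ polynomial $(u',v')\mapsto v'^{\top}(M_0AM_0)u'$ is then irreducible (a bilinear form of rank $\geq 2$ is not a product of linear forms), so its zero set in $\K^n\times\K^n$ is not contained in the union of the zero sets of $(u',v')\mapsto v'^{\top}Cu'$ and $(u',v')\mapsto v'^{\top}Qu'$ unless $C$ or $Q$ is a scalar multiple of $M_0AM_0$, or $Q=0$; since $\rk A\geq 2$, a perturbation argument lets one choose $M_0$ (invertible, with $\tr(AM_0)=0$) so that none of these degeneracies occurs, and then $(u',v')$ exists. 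For $\rk A=1$, write $A=uv^{\top}$; then $V=\{M:\,v^{\top}Mu=0\}$ and $C=(Bu)v^{\top}$ with $q:=Bu\neq 0$, and the two conditions become $Mu\in\{x:v^{\top}x=0\}$ and $q\in M\{x:v^{\top}x=0\}$, which is easily arranged by specifying $M$ on a suitable direct-sum decomposition of $\K^n$, the key being $\dim\{x:v^{\top}x=0\}=n-1\geq 2$.

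The step I expect to be the main obstacle is the choice of $M_0$ in the case $\rk A\geq 2$: making it simultaneously invertible, orthogonal to $A$ for $b$, and free of the degenerate alignments $C\parallel M_0AM_0$, $Q\parallel M_0AM_0$, $Q=0$, which over very small finite fields forces the perturbation argument to be turned into an explicit count. The hypothesis $n\geq 3$ enters most visibly through $\dim\{x:v^{\top}x=0\}\geq 2$ in the rank-one case, and it is indispensable: when $n=2$ the hyperplane $\{M\in\Mat_2(\K):m_{11}=0\}$ has $\left[\begin{smallmatrix}0&1\\1&0\end{smallmatrix}\right]$ outside its set of pairwise products, as a direct check shows.
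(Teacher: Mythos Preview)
Your reduction is sound: writing $V=\{A\}^\bot$ and setting $C=BA$, you correctly reduce the theorem to finding, for each nonzero $C$, some $M\in\GL_n(\K)$ with $\tr(AM)=0=\tr(CM^{-1})$. This is exactly the content of the paper's Proposition~\ref{inverse} (stated there for an arbitrary nonzero pair). Your treatment of the case $\rk A=1$ via a direct construction adapted to $\ker v^\top$ is correct in spirit; the details fill in routinely once one distinguishes whether $u$ and $q=Bu$ lie in $\ker v^\top$, and the hypothesis $n\geq 3$ enters exactly where you say.

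The genuine gap is in the case $\rk A\geq 2$, and it is twofold. First, the claim that the $\K$-rational zero set of the irreducible bilinear form $v'^{\top}(M_0AM_0)u'$ cannot be covered by the zero sets of $v'^{\top}Cu'$ and $v'^{\top}Qu'$ (barring the listed degeneracies) is a Nullstellensatz-type statement that is immediate over $\bar\K$ but not over $\K$: over a small finite field the $\K$-points of an irreducible quadric can lie inside those of an unrelated product, so irreducibility alone does not decide the matter. Second, the ``perturbation argument'' producing an $M_0\in\GL_n(\K)\cap V$ free of the alignments $C\parallel M_0AM_0$, $Q\parallel M_0AM_0$, $Q=0$ is not carried out; over $\F_2$ or $\F_3$ the supply of admissible $M_0$ is finite and small, and nothing you have written excludes the possibility that every one of them is degenerate. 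You flag both issues yourself but do not resolve them, so the proof is incomplete precisely at the point you identify as the main obstacle.

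The paper sidesteps these difficulties with a qualitatively different argument. Its proof of the key existence statement is by contradiction: assuming no invertible $M$ satisfies both trace conditions, it tests against the family $\left[\begin{smallmatrix}1&X\\0&Q\end{smallmatrix}\right]$ and its inverse, and deduces that every non-eigenvector of $A_1$ must span $\im A_2$; this forces $A_1$ (and by symmetry $A_2$) to be a scalar multiple of $I_n$, whereupon the cyclic permutation matrix yields the contradiction. No split on $\rk A$ and no field-size analysis is needed. The paper also treats the singular target $M$ by a separate construction (choose $C\in H$ with $\Ker C=\Ker M$ via the Flanders--Meshulam bound, then solve $BC=M$ inside $H$) rather than through your unified reduction, though your route via the single claim is perfectly legitimate once that claim is actually proved.
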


\noindent So far, we have not found any linear subspace $V$ of $\Mat_n(\K)$
such that $\codim V<n-1$ and $V^{(2)}\neq \Mat_n(\K)$.

\vskip 3mm
\noindent Theorems \ref{LC2} and \ref{prodall} will be respectively proven in Sections
\ref{genproductpairs} and \ref{productofanysize}, whilst
Section \ref{hyperplanes} is devoted to the proof of Theorem \ref{prod2}: there, we will
also solve the special case $n=2$ (i.e.\ we will determine, up to conjugation, all
the hyperplanes $H$ of $\Mat_2(\K)$ for which $H^{(2)}=\Mat_2(\K)$).
Those three sections are essentially independent one from the others.

\section{The linear subspace spanned by products of pairs}\label{genproductpairs}

\subsection{Products of pairs from the same subspace}

Our proof of Theorem \ref{LC2} is based on the following result:

\begin{prop}\label{bracket}
Let $V$ be a linear subspace of $\Mat_n(\K)$ such that $\codim V<n-1$.
Then
$$\frak{sl}_n(\K)=\Vect \bigl\{[A,B] \mid (A,B)\in V^2\bigr\}.$$
\end{prop}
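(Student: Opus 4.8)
The plan is to show that the span $W := \Vect\{[A,B] \mid (A,B)\in V^2\}$, which is obviously contained in $\frak{sl}_n(\K)$, is in fact everything. Since $W$ is visibly stable under conjugation by any $g\in\GL_n(\K)$ (because $g[A,B]g^{-1}=[gAg^{-1},gBg^{-1}]$ and $gVg^{-1}$ has the same codimension as $V$... wait, that would only help if $V$ were conjugation-invariant, which it is not). So instead I would argue as follows: first reduce to showing that $E_{1,2}\in W$, by a counting/orthogonality argument. Indeed, $\frak{sl}_n(\K)$ is spanned (as a vector space) by the matrices $E_{i,j}$ with $i\neq j$ together with the diagonal traceless matrices $E_{i,i}-E_{j,j}$; and each of these is conjugate, via a \emph{permutation} matrix together with elementary operations, to $E_{1,2}$ or to $E_{1,1}-E_{2,2}=[E_{1,2},E_{2,1}]$. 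But the subtlety is that $W$ need not be conjugation-invariant. So the cleaner route is to work with the orthogonal complement $W^\bot$ for the bilinear form $b:(A,B)\mapsto \tr(AB)$: it suffices to prove that $W^\bot\subseteq \K I_n$, since $(\frak{sl}_n(\K))^\bot=\K I_n$ already and $W\subseteq\frak{sl}_n(\K)$ forces $\K I_n\subseteq W^\bot$, so equality of the orthogonals gives $W=\frak{sl}_n(\K)$.

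So suppose $M\in\Mat_n(\K)$ satisfies $\tr\bigl(M[A,B]\bigr)=0$ for all $(A,B)\in V^2$. Using $\tr(M[A,B])=\tr([M,A]B)=b([M,A],B)$, this says that $[M,A]\in V^\bot$ for every $A\in V$. Now $\dim V^\bot=\codim V<n-1$. The key point is then to exploit that the linear map $A\mapsto [M,A]=\ad_M(A)$ sends the large subspace $V$ into the small subspace $V^\bot$. I would bound the rank: the restriction $\ad_M|_V$ has image inside $V^\bot$, hence rank at most $\codim V$; since $\dim V = n^2-\codim V$, the kernel of $\ad_M|_V$ has dimension at least $n^2 - 2\codim V > n^2 - 2(n-1)=(n-1)^2$. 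So $\Ker(\ad_M)$ — the commutant of $M$ in $\Mat_n(\K)$ — contains a subspace of dimension $>(n-1)^2$, i.e. $\dim\Com(M)>(n-1)^2$. It is classical that the commutant of any matrix $M$ has dimension $\geq n$, with equality iff $M$ is nonderogatory (cyclic); more to the point, $\dim\Com(M)$ can be computed from the sizes of the Jordan-type blocks, and $\dim\Com(M)> (n-1)^2$ forces $M$ to be extremely close to a scalar.

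The main obstacle — and the heart of the argument — is therefore the following linear-algebra fact: \emph{if $\dim\Com(M) > (n-1)^2$ then $M\in\K I_n$.} I would prove this by passing to the algebraic closure $\L$ (the commutant dimension is insensitive to field extension) and invoking the block decomposition of the commutant: if the invariant factors / primary decomposition of $M$ has more than one block of size $\geq 1$ after accounting for multiplicities, the commutant dimension is a sum $\sum_{i,j}\min(d_i,d_j)$ over the block sizes $d_i$, and one checks elementarily that this quantity exceeds $(n-1)^2$ only when there is essentially one block filling almost all of the space — and then the traceless-image constraint $[M,A]\in V^\bot$ together with $\codim V<n-1$ rules out even a single Jordan block of nilpotent type of size $\geq 2$, leaving only $M$ scalar. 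Concretely: write $M = \lambda I_n + N$; if $N\neq 0$ then $\ad_M=\ad_N\neq 0$, and one shows $\ad_N|_V$ cannot have rank $\leq \codim V<n-1$ unless $N=0$, by exhibiting enough elements $A\in V$ with $[N,A]$ spanning a space of dimension $\geq n-1$ (this uses that $V$, having codimension $<n-1$, meets every "large" coordinate subspace). Once $M\in\K I_n$ is established, we get $W^\bot\subseteq\K I_n$, hence $W=\frak{sl}_n(\K)$, and the proposition follows.
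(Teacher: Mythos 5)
Your overall strategy is exactly the paper's: pass to the orthogonal complement of $W$ for $b$, observe that $\ad_M$ maps $V$ into $V^\bot$, and use rank--nullity to force $\dim\Com(M)$ to be so large that $M$ must be scalar. The framework is sound, but the key lemma you isolate is false as stated, and its correct version --- which is the computational heart of the proof --- is never actually proved. First, the arithmetic: $n^2-2(n-1)=(n-1)^2+1$, not $(n-1)^2$. This slip matters, because the fact ``$\dim\Com(M)>(n-1)^2$ implies $M\in\K I_n$'' is wrong: for $n=3$ the matrix $\Diag(a,a,b)$ with $a\neq b$ has commutant $\Mat_2(\K)\oplus\Mat_1(\K)$ of dimension $5>(n-1)^2=4$. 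The sharp statement is that a non-scalar $M$ satisfies $\dim\Com(M)\leq (n-1)^2+1$, equivalently $\rk(\ad_M)=\codim\Com(M)\geq 2n-2$, and this bound is attained (by $E_{1,2}$ in general, and by $\Diag(a,a,b)$ when $n=3$). Your rank--nullity step actually yields the strictly stronger inequality $\dim\Com(M)>(n-1)^2+1$ (because $\codim V<n-1$), so the corrected lemma would close the argument --- but it has to be proved.

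Second, you do not prove it. The sentence ``one checks elementarily that this quantity exceeds $(n-1)^2$ only when\dots'' and the subsequent ``Concretely\dots'' passage are circular: ``one shows $\ad_N|_V$ cannot have rank $\leq\codim V$ unless $N=0$'' is precisely the assertion to be established, and the parenthetical about $V$ meeting every large coordinate subspace is not an argument. The paper closes exactly this gap with the Frobenius formula: if the invariant factors of $M$ have degrees $d_1\geq\dots\geq d_p$ with $\sum_i d_i=n$, then $\dim\Com(M)=\sum_{i,j}d_{\max(i,j)}$, whence
$$\codim\Com(M)=\sum_{i,j}\bigl(d_id_j-d_{\max(i,j)}\bigr)\geq d_1^2-d_1+2(d_1-1)\sum_{i\geq 2}d_i\geq 2d_1-2+2\sum_{i\geq 2}d_i=2n-2$$
as soon as $d_1\geq 2$, i.e.\ as soon as $M$ is not scalar. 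You need this computation (or an equivalent one) to finish. Note also that your reduction ``$M=\lambda I_n+N$ with $N\neq 0$'' should read ``$M$ not scalar'': $\ad_N=0$ exactly when $N$ is scalar, not only when $N=0$.
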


\begin{proof}
Set $F:=\Vect \bigl\{[A,B] \mid (A,B)\in V^2\bigr\}$.
The inclusion $F \subset \frak{sl}_n(\K)$ is trivial.
Conversely, let $A \in F^\bot$ and $B \in V$.
Then, for every $C \in V$, one has
$\tr(A[B,C])=0$ hence $\tr([A,B]C)=0$.
This shows $\ad_A : M \mapsto [A,M]$ maps $V$ into $V^\bot$.
By the rank theorem, we deduce that
$$\dim \Ker \ad_A+\dim V^\bot \geq \dim V$$
hence
$$2\,\codim V \geq \codim \Ker \ad_A.$$
Assume that $A$ is not a scalar multiple of the unit matrix $I_n$.
Denote by $P_1,\dots,P_p$ its elementary factors, with $P_p \mid P_{p-1} \mid \dots \mid P_1$,
and $d_i:=\deg P_i$.
Then the Frobenius theorem on the dimension of the centralizer of a matrix
(Theorem 19 p.111 of \cite{Jacobson}) shows that
$$\dim \Ker \ad_A=\sum_{k=1}^p (2k-1)\,d_k=\sum_{1 \leq i,j \leq p} d_{\max(i,j).}$$
Therefore
$$2\codim V \geq \codim \Ker \ad_A=\sum_{1 \leq i,j \leq p} \bigl(d_id_j-d_{\max(i,j)}\bigr) \geq
d_1^2-d_1+2\sum_{i=2}^p d_i(d_1-1).$$
However $d_1 \geq 2$ since $A$ is not a scalar multiple of $I_n$,
hence
$$2\codim V \geq \codim \Ker \ad_A \geq 2d_1-2+2\sum_{i=2}^p d_i=2n-2.$$
This contradicts the initial assumption on $V$. Hence
$F^\bot \subset \Vect(I_n)$ and therefore $\frak{sl}_n(\K)=\Vect(I_n)^\bot \subset F$.
\end{proof}

From there, proving Theorem \ref{LC2} is easy.
Let $V$ be a linear subspace of $\Mat_n(\K)$ such that $\codim V < n-1$.
Then Proposition \ref{bracket} shows that $\frak{sl}_n(\K) \subset \Vect V^{(2)}$.
However, if $\frak{sl}_n(\K)=\Vect V^{(2)}$, then we would have
$\forall (A,B)\in V^2, \; \tr(AB)=0$, hence $V \subset V^\bot$ which would imply that
$\codim V \geq \frac{n^2}{2}$, in contradiction with the hypothesis $\codim V < n-1$.
Since $\frak{sl}_n(\K)$ is a hyperplane of $\Mat_n(\K)$, this proves
$\Vect V^{(2)}=\Mat_n(\K)$.

\subsection{Products of pairs from two different subspaces}

In this short section, we will diverge slightly from the main theme of this paper.
Our aim is the following result, which looks analogous to Theorem \ref{LC2} but
neither generalizes it nor follows from it.

\begin{prop}
Let $V$ and $W$ be two linear subspaces of $\Mat_n(\K)$.
\begin{enumerate}[(a)]
\item If $\codim V+\codim W<n$, then $\Mat_n(\K)$
is spanned by $V \cdot W:=\bigl\{BC\mid (B,C)\in V \times W\bigr\}$.
\item If $\codim V+\codim W=n$ and $\Mat_n(\K)$ is not spanned by $V \cdot W$, then there is an integer $p \in \lcro 0,n\rcro$
and there are non-singular matrices $P,Q,R$ of $\Mat_n(\K)$ such that
$$V=P\,V_p\,Q \quad \text{and} \quad W=Q^{-1}\,W_p\,R$$
where, for $k \in \lcro 0,n\rcro$, we have set
$$V_k:=\biggl\{\begin{bmatrix}
0 & L \\
M & N
\end{bmatrix} \mid (L,M,N)\in \Mat_{1,n-k}(\K) \times \Mat_{n-1,k}(\K) \times \Mat_{n-1,n-k}(\K)\biggr\}$$
and
$$W_k:=\biggl\{\begin{bmatrix}
C & A \\
0 & B
\end{bmatrix} \mid (C,A,B)\in \Mat_{k,1}(\K) \times \Mat_{k,n-1}(\K) \times \Mat_{n-k,n-1}(\K)\biggr\}.$$
\end{enumerate}
\end{prop}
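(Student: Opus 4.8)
The plan is to dualize everything through the non‑degenerate trace form $b$. The first observation is that $\Mat_n(\K)$ fails to be spanned by $V\cdot W$ if and only if there exists a nonzero $A\in\Mat_n(\K)$ such that $\tr(ABC)=0$ for all $(B,C)\in V\times W$; since $\tr(ABC)=b(AB,C)=b(CA,B)$, this amounts to the two conditions $AV\subseteq W^\bot$ and $WA\subseteq V^\bot$. Everything then hinges on bounding $\rk A$: left multiplication by $A$ vanishes precisely on the matrices whose image is contained in $\Ker A$, a subspace of dimension $n(n-\rk A)$, so
$$\dim(AV)\ \geq\ \dim V-n(n-\rk A)\ =\ n\,\rk A-\codim V,$$
which, combined with $\dim(AV)\leq\dim W^\bot=\codim W$, gives $n\,\rk A\leq\codim V+\codim W$. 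Under the hypothesis of (a) this forces $\rk A=0$, i.e.\ $A=0$: a contradiction, so (a) holds.

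For (b) the same inequality and the hypothesis $\codim V+\codim W=n$ force $\rk A=1$, so I would write $A=U\Phi$ for some nonzero $U\in\Mat_{n,1}(\K)$ and nonzero $\Phi\in\Mat_{1,n}(\K)$. Set $\calV:=\{\Phi M\mid M\in V\}\subseteq\Mat_{1,n}(\K)$ and $\calW:=\{MU\mid M\in W\}\subseteq\Mat_{n,1}(\K)$. Since $M\mapsto\Phi M$ is onto $\Mat_{1,n}(\K)$ with kernel of dimension $n(n-1)$, and $AV=\{UR\mid R\in\calV\}$ has dimension $\dim\calV$, equality in the estimate above forces $\dim\calV=n-\codim V$, whence $\{M\mid\Phi M=0\}\subseteq V$ and $V=\{M\in\Mat_n(\K)\mid\Phi M\in\calV\}$; symmetrically, from $WA\subseteq V^\bot$ one gets $\{M\mid MU=0\}\subseteq W$, so $W=\{M\in\Mat_n(\K)\mid MU\in\calW\}$ with $\dim\calW=\codim V$. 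Moreover, for $B\in V$ and $C\in W$ we have $\tr(ABC)=(\Phi B)(CU)$, a scalar, so the hypothesis on $A$ says exactly that $RC=0$ whenever $R\in\calV$ and $C\in\calW$; comparing the dimensions just obtained, $\calW$ is precisely the space of columns annihilated by $\calV$, and dually $\calV$ is the space of rows annihilated by $\calW$.

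It remains to conjugate $(V,W)$ into the stated normal form. Put $p:=\codim V\in\lcro 0,n\rcro$, so $\dim\calV=n-p$ and $\dim\calW=p$; note that $V_p$ and $W_p$ have exactly the shape just obtained, with $\Phi$, $U$, $\calV$, $\calW$ replaced respectively by the first vector $\Phi_0$ of the canonical basis of $\Mat_{1,n}(\K)$, the first vector $U_0$ of the canonical basis of $\Mat_{n,1}(\K)$, the space $\calV_p$ of row vectors whose first $p$ entries vanish, and the column‑annihilator $\calW_p$ of $\calV_p$. I would then choose a nonsingular $Q$ carrying $\calW$ onto $\calW_p$ — which, since $\calV$ and $\calW$ are mutual annihilators, forces the corresponding identity $\calV_pQ=\calV$ — together with a nonsingular $P$ whose inverse has first row $\Phi$ and a nonsingular $R$ whose inverse has first column $U$. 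Reading off the defining conditions of $V_p$ and $W_p$ after the substitutions $M\mapsto P^{-1}MQ^{-1}$ and $M\mapsto QMR^{-1}$ then yields $V=P\,V_p\,Q$ and $W=Q^{-1}\,W_p\,R$, the cases $p=0$ and $p=n$ being immediate. I expect the delicate point to be precisely that one single $Q$ must work simultaneously for $V$ and for $W$: this is possible only because of the annihilation relation between $\calV$ and $\calW$, so that aligning $\calW$ with $\calW_p$ automatically aligns $\calV$ on the other side; establishing that relation (together with pinning down that $\rk A=1$) is where the substance of the proof lies.
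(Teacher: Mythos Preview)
Your argument is correct and follows essentially the same route as the paper: dualize via the trace form, bound $\rk A$ by comparing $\dim(AV)$ against $\dim W^\bot$ through the rank--nullity theorem, and in the boundary case exploit equality to force the kernels of left/right multiplication by the rank-$1$ matrix $A$ to sit inside $V$ and $W$ respectively, which reduces everything to a pair of mutually annihilating row/column subspaces that one then aligns by a single middle conjugation $Q$. The only cosmetic difference is that you carry the rank-$1$ factorization $A=U\Phi$ throughout, whereas the paper first conjugates so that $A=E_{1,1}$ and then works in coordinates; the content is identical.
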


\begin{Rem}
A straightforward computation shows that, for every $p \in \lcro 0,n\rcro$,
one has $\codim V_p+\codim W_p=n$ whilst, for every pair
$(B,C) \in V_p \times W_p$, the product $BC$ has $0$ as entry at the $(1,1)$ spot,
hence $E_{1,1}$ is not a linear combination of matrices in $V_p \cdot W_p$. \\
In particular, this proves that the upper bound in point (a) is tight.
\end{Rem}

\begin{proof}
Assume that $\codim V+\codim W \leq n$. Set $\calA:=V \cdot W$.
We wish to prove that $(V\cdot W)^\bot=\{0\}$ save for a few special cases.
Let $D \in \calA^\bot$. Set $B \in V$. Then $\forall C \in W, \; \tr(DBC)=0$.
The linear map
$$f_D : \begin{cases}
\Mat_n(\K) & \longrightarrow \Mat_n(\K) \\
B & \longmapsto D\,B
\end{cases}$$
thus maps $V$ into $W^\bot$.
However, $f_D$ is represented in a well-chosen basis by the matrix $D \otimes I_n$, with rank $n\,\rk D$,
hence $\dim \Ker f_D=n\,(n-\rk D)$.
By the rank theorem, we deduce that
$$\dim V \leq \dim \Ker f_D+\dim W^\bot
=n\,(n-\rk D)+\codim W$$
hence
$$\codim V+\codim W \geq n\,\rk D.$$
If $\codim V+\codim W<n$, this shows $D=0$, hence $\calA^\bot=\{0\}$, and we deduce that $\Vect \calA=\Mat_n(\K)$. \\
Assume now that $\codim V+\codim W=n$ and $\calA^\bot \neq \{0\}$, and choose $D \in \calA^\bot \setminus \{0\}$. Then
$\rk D=1$. Notice then that $\codim V+\codim W \leq n\,\rk D$, so the rank theorem shows that
$f_D(V)=W^\bot$ and $\Ker f_D \subset V$. A similar line of reasoning shows that
$$g_D :\begin{cases}
\Mat_n(\K) & \longrightarrow \Mat_n(\K) \\
C & \longmapsto CD
\end{cases}$$
satisfies $\Ker g_D \subset W$.
Since $\rk D=1$, there are non-singular matrices $P$ and $R$ such that $D=PE_{1,1}R$. Replacing
$V$ and $W$ respectively with $RV$ and $WP$, we may assume $D=E_{1,1}$.
Then the inclusions $\Ker f_D \subset V$ and $\Ker g_D \subset W$ show that
$V$ contains every matrix of the form $\begin{bmatrix}
0 \\
M
\end{bmatrix}$ for some $M \in \Mat_{n-1,n}(\K)$, and every matrix of the form
$\begin{bmatrix}
0 & N \\
\end{bmatrix}$ for some $N \in \Mat_{n,n-1}(\K)$.
We may then find linear subspaces $E$ and $F$ respectively of $\Mat_{1,n}(\K)$ and $\Mat_{n,1}(\K)$ such that
$$V=\biggl\{\begin{bmatrix}
L \\
M
\end{bmatrix} \mid L \in E, \; M \in \Mat_{n-1,n}(\K)\biggr\} \quad \text{and} \quad
W=\biggl\{\begin{bmatrix}
C & N
\end{bmatrix} \mid C \in F, \; N \in \Mat_{n,n-1}(\K)\biggr\},$$
with $2n-\dim E-\dim F=\codim V+\codim W$, hence $\dim E+\dim F=n$. \\
The hypothesis $D \in \calA^\bot$ yields $LC=0$ for every $(L,C)\in E \times F$. \\
Setting $p:=n-\dim E$ and choosing a non-singular matrix $Q$ such that $E\,Q=\Bigl\{\begin{bmatrix}
0 & L_1
\end{bmatrix} \mid L_1 \in \Mat_{1,n-p}(\K)\Bigr\}$, we may replace $V$ with $V\,Q$ and $W$ with $Q^{-1}W$.
In this situation, we still have $E_{1,1} \in \calA^\bot$, and we now learn that
$$F \subset \biggl\{\begin{bmatrix}
C_1 \\
0
\end{bmatrix} \mid C_1 \in \Mat_{p,1}(\K)\biggr\}.$$
Since $\dim F=n-p$, we deduce that
this inclusion is an equality, which finally shows that $V=V_p$ and $W=W_p$.
\end{proof}

\section{The semigroup generated by a large affine subspace}\label{productofanysize}

\subsection{Starting the induction}\label{startinduc}

We will prove Theorem \ref{prodall} by establishing the slightly stronger statement:

\begin{theo}\label{prodallaffine}
Let $\calV$ be an affine subspace of $\Mat_n(\K)$ such that $\codim \calV<n-1$. \\
Then $\Mat_n(\K)=\calV^{(\infty)}$.
\end{theo}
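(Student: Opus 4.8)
The plan is to run an induction on $n$, the case $n=1$ being trivial since a codimension-$<0$ affine subspace is impossible, so $n=1$ is vacuous and the first real case is $n=2$, where $\codim\calV<1$ forces $\calV=\Mat_2(\K)$. For the inductive step, fix $n\geq 3$ and an affine subspace $\calV\subset\Mat_n(\K)$ with $\codim\calV<n-1$. I want to show $\Mat_n(\K)=\calV^{(\infty)}$. The overall strategy is to first show that $\calV^{(\infty)}$ contains a matrix of a very special reduced shape (typically a rank-$1$ idempotent, or more flexibly a matrix conjugate to $E_{1,1}$), and then to ``compress'' products into the top-left $(n-1)\times(n-1)$ corner so as to invoke the induction hypothesis there, finally reconstituting arbitrary matrices of $\Mat_n(\K)$ from corner data plus a few elementary moves.

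Concretely, I would proceed as follows. First, since $\calV$ is affine of codimension $<n-1$, its direction $V_0$ is a linear subspace of codimension $<n-1$; by Theorem \ref{LC2} the linear span of $V_0^{(2)}$, and hence the linear span of products of pairs from $\calV$ (after absorbing a base point), is all of $\Mat_n(\K)$, and in particular $\calV^{(\infty)}$ linearly spans $\Mat_n(\K)$. That already shows $\calV^{(\infty)}$ is not contained in any proper subspace, but I need the multiplicative statement, so the next step is to produce inside $\calV^{(\infty)}$ a nonzero matrix $N$ of rank $1$: one way is to pick $A\in\calV$; if $A$ is singular then some power or product already drops rank, and iterating (using that $\calV^{(\infty)}$ is a semigroup spanning $\Mat_n(\K)$ linearly, so it cannot consist solely of invertible matrices once $n\geq 2$) one reaches a rank-$1$ element. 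Conjugating, assume a rank-$1$ idempotent $E_{1,1}\in\calV^{(\infty)}$. Then for any $A_1,\dots,A_k\in\calV$ the product $E_{1,1}A_1E_{1,1}\cdots A_kE_{1,1}$ lies in $\calV^{(\infty)}$ and equals $E_{1,1}$ times a scalar read off from the $(1,1)$ entries — this is too crude, so instead I would use a rank-$(n-1)$ element: show $\calV^{(\infty)}$ contains a matrix $R$ of rank $n-1$ whose image and kernel are in ``general position'', conjugate so that $R$ maps $\K^n$ onto $\K^{n-1}\times\{0\}$ with kernel $\{0\}\times\K$, and observe that $B\mapsto RBR$ sends $\calV$ into a copy of $\Mat_{n-1}(\K)$. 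The image $\calV':=R\calV R$ viewed in $\Mat_{n-1}(\K)$ is an affine subspace; the key dimension count is that $\codim_{\Mat_{n-1}(\K)}\calV'<n-2$, so the induction hypothesis yields $\Mat_{n-1}(\K)=(\calV')^{(\infty)}\subset R\,\calV^{(\infty)}R$, i.e. every matrix supported in the top-left corner lies in $\calV^{(\infty)}$.

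Having all corner matrices, I would finish by showing the full algebra is generated: using one more well-chosen element of $\calV$ (or of $\calV^{(\infty)}$) that ``leaks'' out of the corner — concretely a matrix with a nonzero entry in the last row or column — together with corner matrices, one manufactures all the elementary matrices $E_{i,j}$ as products, and since $\{E_{i,j}\}$ multiplicatively generate a set spanning $\Mat_n(\K)$ and one has enough rank-$1$ pieces, every matrix of $\Mat_n(\K)$ (being expressible as a sum, but we need a product!) must instead be reached by writing an arbitrary $M\in\GL_n(\K)$ as a product of matrices each differing from the identity in one corner — i.e. a Bruhat/elementary-matrix factorization — and noting each such factor lies in $\calV^{(\infty)}$; singular $M$ are handled by multiplying an invertible one by a corner projection, which is already available. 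The main obstacle, and the place where the codimension bound $n-1$ is genuinely used, is the second step: guaranteeing that $\calV^{(\infty)}$ contains a rank-$(n-1)$ matrix with prescribed (general-position) kernel and image so that the conjugated ``corner compression'' $B\mapsto RBR$ does not collapse the codimension of $\calV$ by too much — one must verify $\codim_{\Mat_{n-1}(\K)}(R\calV R)\le\codim_{\Mat_n(\K)}\calV<n-1$, hence $<n-2$ only if the compression is generic, and arranging genericity (perhaps by first replacing $\calV$ by $S\calV T$ for suitable $S,T\in\calV^{(\infty)}$, or by an averaging/Zariski-density argument when $\K$ is infinite, with an ad hoc finite-field adjustment) is the delicate point. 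Everything else is bookkeeping with elementary matrices and the rank theorem.
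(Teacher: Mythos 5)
Your overall architecture (induction on $n$, compression of $\calV$ into an $(n-1)\times(n-1)$ corner to invoke the induction hypothesis, separate treatment of singular matrices via a rank-$(n-1)$ element) is the right one, and it is essentially the strategy of the paper. But two of the steps you flag as ``delicate'' or ``bookkeeping'' are in fact the heart of the matter, and as written they do not go through. First, the dimension count for the compression genuinely falls one short: if $R$ is a rank-$(n-1)$ idempotent, the rank theorem only gives $\codim_{\Mat_{n-1}(\K)}(R\calV R)<n-1$, whereas the induction hypothesis requires $\codim<(n-1)-1=n-2$. This is not a technicality you can fix by ``arranging genericity'': the paper shows (Proposition \ref{alwaysgood}) that after analysing when the favourable conditions can fail for \emph{every} conjugate of $\calV$, one is forced into $V\supset\frak{sl}_n(\K)$, hence into the genuinely exceptional case $n=3$ with $\calV$ a trace-affine hyperplane, which must then be handled by an explicit computation with transvections and the generation of $\GL_3(\K)$ by dilatations. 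That entire case analysis, and the exceptional case itself, are absent from your proposal.

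Second, the reconstruction step cannot work as described. Every matrix of the form $RMR$ is singular, and a product of singular matrices is singular; so no amount of multiplying corner-supported matrices will ever produce an elementary matrix $I_n+\lambda E_{i,j}$ or any other invertible matrix. To recover $\GL_n(\K)$ you must keep genuine elements of $\calV$ (not their compressions) in the product and control the blocks that the compression forgets. The paper does this by restricting to $\calW=\{M\in\calV:\,C_1(M)=e_1\}$, writing its elements as $\begin{bmatrix}1&L\\0&K\end{bmatrix}$, applying the induction hypothesis to $K(\calW)$, and then running a separate additive argument (using the nonzero subspace $L(H)$ and the fact that the achievable set $\calA$ of top rows is closed under addition) to show the residual block $L$ can be made arbitrary. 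Relatedly, your production of the rank-$(n-1)$ idempotent $R$ inside $\calV^{(\infty)}$ is circular as sketched: normalising a rank-$(n-1)$ element of $\calV$ into an idempotent requires multiplying by invertible matrices of $\calV^{(\infty)}$, which is what you are trying to prove; the paper avoids this by first reducing to $\GL_n(\K)\subset\calV^{(\infty)}$ and only then handling singular matrices.
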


\noindent Note that the result trivially holds when $n \leq 2$.
We will now proceed by induction. We fix an integer $n \geq 3$ and assume Proposition \ref{prodallaffine}
holds for every affine subspace of $\Mat_{n-1}(\K)$ with a codimension lesser than $n-2$.
In the rest of the proof, we fix an affine subspace $\calV$ of $\Mat_n(\K)$ such that
$\codim \calV<n-1$. We let $V$ denote its translation vector space.

\subsection{Reduction to the case of non-singular matrices}\label{singular}

In this section, we make the following assumption:
\begin{center}
Every matrix of $\GL_n(\K)$ is a product of matrices of $\calV$.
\end{center}
We will prove right away that this entails that every matrix of $\Mat_n(\K)$ is a product of matrices of $\calV$.
Classically, there are three steps:

\begin{enumerate}[(i)]
\item $\calV$ contains a rank $n-1$ matrix;
\item $\calV^{(\infty)}$ contains every rank $n-1$ matrix of $\Mat_n(\K)$;
\item $\calV^{(\infty)}$ contains every singular matrix of $\Mat_n(\K)$.
\end{enumerate}

\begin{proof}[Proof of step (i)]
The linear subspace $V^\bot$ has dimension lesser than $n$ hence
there is an integer $i \in \lcro 1,n\rcro$ such that $V^\bot$ contains no non-zero matrix
with all columns zero save for the $i$-th. Conjugating by a permutation matrix, we lose no generality by assuming
$V^\bot$ contains no non-zero matrix with all columns zero save for the $n$-th. This shows that $f : M \mapsto L_n(M)$
is a surjective affine map from $\calV$ to $\Mat_{1,n}(\K)$ (where $L_n(M)$ denotes the $n$-th row of $M$).
Then $\calW:=f^{-1} \{0\}$ is an affine subspace of $\calV$ with $\dim \calW=\dim \calV-n>n^2-(2n-1)$.
We write then every $M \in \calW$ as
$$M=\begin{bmatrix}
\alpha(M) \\
0
\end{bmatrix} \quad \text{with $\alpha(M) \in \Mat_{n-1,n}(\K)$.}$$
Then $\alpha(\calW)$ is an affine subspace of $\Mat_{n-1,n}(\K)$ and $\dim \alpha(\calW)>n(n-2)$.
Using our generalization of Dieudonn\'e's theorem for affine subspaces (cf.\ Theorem 6 of \cite{dSPaffpres}), we deduce that
$\alpha(\calW)$ contains a rank $n-1$ matrix, hence $\calV$ has a rank $n-1$ element.
\end{proof}

\begin{proof}[Proof of step (ii)]
Let $A \in \Mat_n(\K)$ be a rank $r$ matrix. If $\calV^{(\infty)}$ contains a rank $r$ matrix $B$,
then there are non-singular matrices $P$ and $Q$ such that $A=P\,B\,Q$, hence the preliminary assumption
shows that $A \in \calV^{(\infty)}$. Step (ii) follows then readily from step (i).
\end{proof}

\begin{proof}[Proof of step (iii)]
Let $r \in \lcro 0,n-1\rcro$.
Then the rank $r$ matrix $J_r:=\begin{bmatrix}
I_r & 0 \\
0 & 0
\end{bmatrix}$ decomposes as a product $J_r=\underset{k=r+1}{\overset{n}{\prod}}(I_n-E_{k,k})$
of rank $n-1$ matrices, hence it belongs to $\calV^{(\infty)}$ by step (ii).
The argument from step (ii) then shows that $\calV^{(\infty)}$ contains every rank $r$ matrix of $\Mat_n(\K)$.
\end{proof}

\noindent It now suffices to prove that $\GL_n(\K) \subset \calV^{(\infty)}$.

\subsection{A good situation}\label{goodcase}

Recall that $V$ denotes the translation vector space of $\calV$, and set
$$H:=V \cap \Vect(E_{1,2},\dots,E_{1,n}).$$
For every $N \in H$, we write
$$N=\begin{bmatrix}
0 & L(N) \\
0 & 0
\end{bmatrix} \quad \text{with $L(N) \in \Mat_{1,n-1}(\K)$.}$$
Then $L(H)$ is a linear subspace of $\Mat_{1,n-1}(\K)$ and the rank theorem shows that
$$\dim L(H)=\dim H \geq (n-1)-\codim_{\Mat_n(\K)} V>0.$$
Hence $L(H)$ contains a non-zero matrix (this will be of crucial interest later on).

\vskip 3mm
\noindent
Given $M \in \Mat_n(\K)$, we let $C_1(M)$ denote its first column.
We consider the affine map
$$(C_1)_{|\calV} : \begin{cases}
\calV & \longrightarrow \Mat_{n,1}(\K) \\
M & \longmapsto C_1(M).
\end{cases}$$
Let us make a first assumption:
\begin{itemize}
\item[(i)] $(C_1)_{|\calV}$ is onto.
\end{itemize}
Then
$$\calW:=\Bigl\{M \in \calV : \quad C_1(M)=\begin{bmatrix}
1 &  0 & \cdots & 0 \end{bmatrix}^T\Bigr\}$$ is an affine subspace of $\calV$ with $\dim \calW=\dim \calV-n$. \\
For every $M \in \calW$, we write
$$M=\begin{bmatrix}
1 & L(M) \\
0 & K(M)
\end{bmatrix} \quad \text{with $K(M) \in \Mat_{n-1}(\K)$ and $L(M) \in \Mat_{1,n-1}(\K)$.}$$
Finally, we consider the affine subspace $K(\calW)$ of $\Mat_{n-1}(\K)$.
Our second assumption will be:
\begin{itemize}
\item[(ii)] $\codim_{\Mat_{n-1}(\K)} K(\calW)<n-2$.
\end{itemize}

\noindent From there, we will show that every matrix of $\GL_n(\K)$ belongs to $\calV^{(\infty)}$.
Let $M \in \GL_n(\K)$. Then $C_1(M) \neq 0$. We first prove that $C_1(M)$ is also
the first column of a non-singular matrix of $\calV$:

\begin{lemme}
Let $\calV'$ be an affine subspace of $\Mat_n(\K)$ such that $\codim \calV'<n-1$.
Let $C \in \Mat_{n,1}(\K) \setminus \{0\}$ and assume some element of $\calV'$ has $C$ as first column.
Then some element of $\calV' \cap \GL_n(\K)$ has $C$ as first column.
\end{lemme}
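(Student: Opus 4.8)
The plan is to pin down the prescribed first column, reduce to the case where it equals $\begin{bmatrix} 1 & 0 & \cdots & 0 \end{bmatrix}^T$, and then apply the affine version of Dieudonn\'e's theorem to the lower-right $(n-1)\times(n-1)$ block. First, since $C \neq 0$, I would choose $P \in \GL_n(\K)$ with $PC=\begin{bmatrix} 1 & 0 & \cdots & 0 \end{bmatrix}^T$. As left multiplication by $P$ is a bijection of $\Mat_n(\K)$ that preserves $\GL_n(\K)$, turns an affine subspace of codimension $<n-1$ into another one with the same codimension, and sends the first column $C$ of a matrix to $PC$, we may replace $\calV'$ with $P\,\calV'$ and $C$ with $PC$; hence we lose no generality in assuming $C=\begin{bmatrix} 1 & 0 & \cdots & 0 \end{bmatrix}^T$.

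Next I would introduce $\calW:=\bigl\{M \in \calV' : C_1(M)=C\bigr\}$, which is non-empty by assumption. Being the intersection of $\calV'$ with the codimension-$n$ affine subspace of all matrices with first column $C$, it satisfies $\dim \calW \geq \dim \calV'-n$. Writing each $M \in \calW$ as $M=\begin{bmatrix} 1 & L(M) \\ 0 & K(M) \end{bmatrix}$ with $K(M) \in \Mat_{n-1}(\K)$ and $L(M) \in \Mat_{1,n-1}(\K)$, one sees that such an $M$ is non-singular exactly when $K(M) \in \GL_{n-1}(\K)$, so it suffices to prove that the affine subspace $K(\calW)$ of $\Mat_{n-1}(\K)$ meets $\GL_{n-1}(\K)$. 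The non-empty fibres of the affine surjection $M \mapsto K(M)$ from $\calW$ onto $K(\calW)$ have dimension at most $n-1$, since the translation space of such a fibre is contained in the space of matrices of the form $\begin{bmatrix} 0 & L \\ 0 & 0 \end{bmatrix}$; therefore $\dim K(\calW) \geq \dim \calW-(n-1) \geq \dim \calV'-(2n-1)$, and the dimension count collapses to $\codim_{\Mat_{n-1}(\K)} K(\calW) \leq \codim_{\Mat_n(\K)} \calV' < n-1$. Applying the generalization of Dieudonn\'e's theorem for affine subspaces (Theorem 6 of \cite{dSPaffpres}), exactly as in the proof of step (i) above, then yields a matrix $K_0 \in K(\calW) \cap \GL_{n-1}(\K)$; choosing any $M_0 \in \calW$ with $K(M_0)=K_0$ produces an element of $\calV' \cap \GL_n(\K)$ with first column $C$, and undoing the reduction (multiplying by $P^{-1}$ on the left) finishes the argument.

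The main obstacle is really just the codimension bookkeeping: one must make sure the bound on $\codim_{\Mat_{n-1}(\K)} K(\calW)$ comes out strictly below $n-1$, which is precisely the threshold at which the cited affine Dieudonn\'e theorem guarantees an invertible matrix in $\Mat_{n-1}(\K)$; here the various $n^2$, $n$ and constant terms cancel so that one lands exactly on $\codim K(\calW) \leq \codim \calV'$. Everything else --- the reduction by left multiplication and the block description of matrices with a prescribed first column --- is routine linear algebra.
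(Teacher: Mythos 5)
Your proof is correct and follows essentially the same route as the paper: reduce to $C=\begin{bmatrix}1 & 0 & \cdots & 0\end{bmatrix}^T$ by left multiplication, note that the non-empty slice $\calW$ gives $\codim_{\Mat_{n-1}(\K)}K(\calW)\leq \codim \calV' <n-1$ via the rank theorem, and invoke the affine version of Dieudonn\'e's theorem to find a non-singular $K(M_0)$. The only (immaterial) difference is that you spell out the codimension bookkeeping that the paper compresses into one appeal to the rank theorem.
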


\begin{proof}
Set $C_0:=\begin{bmatrix}
1 &  0 & \cdots & 0 \end{bmatrix}^T$. Choosing $P \in \GL_n(\K)$ such that $P\,C=C_0$ and replacing $\calV'$ with $P\,\calV'$,
we may assume $C=C_0$. With the above notations (though not assuming that $N \mapsto C_1(N)$ maps
$\calV'$ onto $\Mat_{n,1}(\K)$), we obtain that $\calW' \neq \emptyset$, hence the rank theorem shows
$\codim_{\Mat_{n-1}(\K)}K(\calW')<n-1$. Dieudonn\'e's theorem for affine subspaces \cite{Dieudonne} then shows
that the affine subspace $K(\calW')$ contains a non-singular matrix, QED.
\end{proof}

\noindent From there, we may choose some $N \in \calV \cap \GL_n(\K)$ with $C_1(M)$ as first column.
The matrix $A:=N^{-1}M$ is then non-singular and has the form
$$A=\begin{bmatrix}
1 & * \\
0 & P
\end{bmatrix} \quad \text{for some $P \in \GL_{n-1}(\K)$.}$$
It thus suffices to prove that $A \in \calV^{(\infty)}$. This will come from the next proposition:

\begin{prop}
Assuming conditions (i) and (ii) hold, let $P \in \GL_{n-1}(\K)$ and $L \in \Mat_{1,n-1}(\K)$. Then the matrix
$\begin{bmatrix}
1 & L \\
0 & P
\end{bmatrix}$ belongs to $\calW^{(\infty)}$.
\end{prop}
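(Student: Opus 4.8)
The plan is to work with the block form $\begin{bmatrix}1 & L \\ 0 & K\end{bmatrix}$ of the elements of $\calW$, exploiting two resources: the induction hypothesis applied to the affine subspace $K(\calW)$ of $\Mat_{n-1}(\K)$, whose codimension is less than $n-2$ by condition (ii); and the observation that, since adding an element of $H$ leaves the first column unchanged, $M+H\subset\calW$ for every $M\in\calW$, so the nonzero subspace $L(H)$ provides translations of the top-right block.

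First I would record the multiplication rule $\begin{bmatrix}1 & L_1 \\ 0 & K_1\end{bmatrix}\begin{bmatrix}1 & L_2 \\ 0 & K_2\end{bmatrix}=\begin{bmatrix}1 & L_2+L_1K_2 \\ 0 & K_1K_2\end{bmatrix}$, from which every product of elements of $\calW$ keeps $1$ and $0$ in its top-left and bottom-left corners. By the induction hypothesis, $K(\calW)^{(\infty)}=\Mat_{n-1}(\K)$; lifting each factor of a product representation of $Q$ in $K(\calW)$ to an element of $\calW$, one gets for every $Q\in\Mat_{n-1}(\K)$ a matrix $Y_Q=\begin{bmatrix}1 & * \\ 0 & Q\end{bmatrix}\in\calW^{(\infty)}$. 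By Dieudonn\'e's theorem for affine subspaces, $K(\calW)$ contains an invertible matrix, so I fix $M_0\in\calW$ with $K_0:=K(M_0)\in\GL_{n-1}(\K)$, and I fix $N_0\in H$ with $\ell_0:=L(N_0)\neq 0$; then $M_0+cN_0\in\calW$ for every $c\in\K$. Multiplying $M_0+cN_0$ on the right by a matrix of $\calW^{(\infty)}$ with bottom-right block $K_0^{-1}$ produces, for each $c$, a matrix $Z_c=\begin{bmatrix}1 & w_0+c\,\ell_0K_0^{-1} \\ 0 & I_{n-1}\end{bmatrix}\in\calW^{(\infty)}$ with $w_0$ independent of $c$.

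The heart of the argument is to turn this single one-parameter family of translations into translations in all directions. For $Q\in\GL_{n-1}(\K)$, a short computation gives $Y_QZ_cY_{Q^{-1}}=\begin{bmatrix}1 & \beta(Q)+c\,\ell_0K_0^{-1}Q^{-1} \\ 0 & I_{n-1}\end{bmatrix}\in\calW^{(\infty)}$ with $\beta(Q)$ independent of $c$. Since $\ell_0K_0^{-1}\neq 0$ and $\GL_{n-1}(\K)$ acts transitively on nonzero rows, I can choose $Q_1,\dots,Q_{n-1}$ so that $\ell_0K_0^{-1}Q_1^{-1},\dots,\ell_0K_0^{-1}Q_{n-1}^{-1}$ is a basis of $\Mat_{1,n-1}(\K)$. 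Because matrices of the form $\begin{bmatrix}1 & * \\ 0 & I_{n-1}\end{bmatrix}$ multiply by simply adding their top-right blocks, the product $\prod_{i=1}^{n-1}Y_{Q_i}Z_{c_i}Y_{Q_i^{-1}}$ has top-right block $\sum_i\beta(Q_i)+\sum_i c_i\,\ell_0K_0^{-1}Q_i^{-1}$, which equals any prescribed row $t$ for a suitable choice of $c_1,\dots,c_{n-1}$; hence $\begin{bmatrix}1 & t \\ 0 & I_{n-1}\end{bmatrix}\in\calW^{(\infty)}$ for all $t\in\Mat_{1,n-1}(\K)$. Finally, for $P\in\GL_{n-1}(\K)$ and $L\in\Mat_{1,n-1}(\K)$, writing $Y_P=\begin{bmatrix}1 & L_P \\ 0 & P\end{bmatrix}$ and multiplying $Y_P\cdot\begin{bmatrix}1 & L-L_P \\ 0 & I_{n-1}\end{bmatrix}$ yields $\begin{bmatrix}1 & L \\ 0 & P\end{bmatrix}$, which therefore lies in $\calW^{(\infty)}$.

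I expect the real obstacle to be the maneuver of the previous paragraph: the $L(H)$-perturbations move the top-right block only along the fixed direction $\ell_0K_0^{-1}$, so the missing directions have to be produced; conjugating by the $Y_Q$ achieves this via the transitivity of $\GL_{n-1}(\K)$ on nonzero rows, and the constant terms $\beta(Q_i)$ are harmless once the variable parts already span $\Mat_{1,n-1}(\K)$. Everything hinges on two facts: that $K_0$ can be taken invertible (Dieudonn\'e), and that the whole group $\GL_{n-1}(\K)$ — not just $K(\calW)$ — is available in the bottom-right corner, which is where the induction hypothesis enters.
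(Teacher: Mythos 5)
Your proposal is correct and follows essentially the same route as the paper: lift a factorization in $K(\calW)$ (induction hypothesis) to reduce to matrices $\begin{bmatrix}1 & t \\ 0 & I_{n-1}\end{bmatrix}$, use the nonzero subspace $L(H)$ to produce a one-parameter family of such matrices in a fixed direction, rotate that direction to a basis of $\Mat_{1,n-1}(\K)$ via right factors with arbitrary invertible bottom-right block, and conclude by additivity of the top-right blocks. The only differences are bookkeeping (you perturb a fixed $M_0$ and sandwich between $Y_Q$ and $Y_{Q^{-1}}$, whereas the paper perturbs the last factor of the product representing $R^{-1}$), and all the computations you rely on check out.
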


\begin{proof}
Condition (ii) and the induction hypothesis yield matrices $P_1,\dots,P_r$ in $K(\calW)$ such that
$P=P_1P_2\cdots P_r$, hence there are row matrices $L_1,\dots,L_r$ in $\Mat_{1,n-1}(\K)$ such that:
\begin{itemize}
\item $Q_k:=\begin{bmatrix}
1 & L_k \\
0 & P_k
\end{bmatrix}$ belongs to $\calV$ for every $k \in \lcro 1,r\rcro$;
\item $Q_1Q_2\cdots Q_r
=\begin{bmatrix}
1 & L' \\
0 & P
\end{bmatrix}$ for some $L' \in \Mat_{1,n-1}(\K)$.
\end{itemize}
In order to conclude, it suffices to prove that the matrix $\begin{bmatrix}
1 & L-L' \\
0 & I_{n-1}
\end{bmatrix}$ belongs to $\calV^{(\infty)}$, since left-multiplying it by $\begin{bmatrix}
1 & L' \\
0 & P
\end{bmatrix}$ yields $\begin{bmatrix}
1 & L \\
0 & P
\end{bmatrix}$. \\
We actually prove that $\calV^{(\infty)}$ contains
$\begin{bmatrix}
1 & L_1 \\
0 & I_{n-1}
\end{bmatrix}$ for every $L_1 \in \Mat_{1,n-1}(\K)$. Notice that the set $\calA$ of those
$L_1 \in \Mat_{1,n-1}(\K)$ such that $\begin{bmatrix}
1 & L_1 \\
0 & I_{n-1}
\end{bmatrix} \in \calV^{(\infty)}$ is closed under sum because $\calV^{(\infty)}$ is closed under product. \\
Let $R \in \GL_n(\K)$.
By the previous line of reasoning, there are matrices
$Q_1=\begin{bmatrix}
1 & L_1 \\
0 & P_1
\end{bmatrix},\dots,Q_r=\begin{bmatrix}
1 & L_r \\
0 & P_r
\end{bmatrix}$ in $\calW$ and a row matrix $L' \in \Mat_{1,n-1}(\K)$ such that $Q_1\cdots Q_r=\begin{bmatrix}
1 & L' \\
0 & R^{-1}
\end{bmatrix}$. Also, there is a row matrix $L'' \in \Mat_{1,n-1}(\K)$ such that $\begin{bmatrix}
1 & L'' \\
0 & R
\end{bmatrix}$ belongs to $\calW^{(\infty)}$. \\
Notice that $L_r$ may be replaced with $L_r+L_0$ for any $L_0 \in L(H)$ (recall the definition of $L(H)$ from the beginning of the section): it follows that
$\begin{bmatrix}
1 & L'+L_0 \\
0 & R^{-1}
\end{bmatrix} \in \calV^{(\infty)}$ for any $L_0 \in L(H)$.
Right-multiplying this matrix by $\begin{bmatrix}
1 & L'' \\
0 & R
\end{bmatrix}$, we deduce that $L'R+L''+L_0R$ belongs to $\calA$ for every $L_0 \in L(H)$.
We have thus found, for every $R \in \GL_n(\K)$, a row matrix $L_R \in \Mat_{1,n-1}(\K)$
such that $L_R+L(H)\,R \subset \calA$. \\
Recall from the beginning of this paragraph that there is a non-zero $E \in L(H)$. \\
We may then find non-singular matrices $P_1,\dots,P_{n-1}$ such that $(EP_i)_{1 \leq i \leq n-1}$
is a basis of $\Mat_{1,n-1}(\K)$. Since $\calA$ is closed under addition and $L(H)$ is a linear subspace of
$\Mat_{1,n-1}(\K)$, we deduce that $\calA$ contains $\sum_{k=1}^{n-1}L_{P_k}+\Vect(EP_k)_{1 \leq k \leq n-1}$,
which clearly equals $\Mat_{1,n-1}(\K)$. Hence $\calA=\Mat_{1,n-1}(\K)$, QED.
\end{proof}

\subsection{Why the good situation almost always arises up to conjugation}

Notice first that given $P \in \GL_n(\K)$,
one has $(P\calV P^{-1})^{(\infty)}=P\,\calV^{(\infty)}\,P^{-1}$, so
we may replace $\calV$ with any conjugate affine subspace
in order to prove that $\calV^{(\infty)}=\Mat_n(\K)$.
We denote by $(e_1,\dots,e_n)$ the canonical basis of $\K^n$.

\vskip 2mm
\noindent
Here, we prove the following result:
\begin{prop}\label{alwaysgood}
Let $\calV$ be an affine subspace of $\Mat_n(\K)$ such that $\codim \calV<n-1$.
Then :
\begin{enumerate}[(a)]
\item Either $n=3$ and there exists $a \in \K$ such that $\calV=\bigl\{M \in \Mat_3(\K): \; \tr M=a\bigr\}$;
\item Or there exists $P \in \GL_n(\K)$ such that $P\,\calV\,P^{-1}$ satisfies conditions
(i) and (ii) of Section \ref{goodcase}.
\end{enumerate}
\end{prop}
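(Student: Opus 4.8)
The plan is to analyze when conditions (i) and (ii) of Section \ref{goodcase} can fail, and to show that a single conjugation can always fix both failures except in the stated exceptional case. I will work with $V$, the translation vector space of $\calV$, setting $c:=\codim_{\Mat_n(\K)} V<n-1$. Condition (i) says the first-column map $(C_1)_{|\calV}$ is onto, equivalently that $V$ is not contained in the set of matrices whose first column is orthogonal to some fixed nonzero vector; in linear terms, $(C_1)_{|V}:V\to\Mat_{n,1}(\K)$ is surjective, i.e.\ $V^\bot$ contains no nonzero matrix supported on its first row only (a matrix of the form $v\,E_{1,\bullet}$). Since $\dim V^\bot=c<n-1<n$, the space $V^\bot$, viewed row by row, cannot contain all $n$ matrices $E_{1,1},\dots,E_{n,1}$'s transposes; more precisely, after conjugating by a permutation matrix we can arrange that $V^\bot$ contains no nonzero matrix whose only nonzero entries lie in the first \emph{column}, which by transposing the bilinear-form bookkeeping gives condition (i) for the first \emph{column} map. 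So (i) alone is cheap to arrange.

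The real work is condition (ii): after imposing (i) and forming $\calW=\{M\in\calV:\ C_1(M)=e_1\}$ and the induced affine subspace $K(\calW)\subset\Mat_{n-1}(\K)$, one needs $\codim_{\Mat_{n-1}(\K)}K(\calW)<n-2$. The rank theorem already gives $\codim K(\calW)\le (n-1)^2-(\dim\calV-n)$. Writing $\dim\calV=n^2-c$ this reads $\codim K(\calW)\le 2n-1-c$, which is $<n-2$ precisely when $c>n+1$; that is far too weak since we only know $c\le n-2$. So a naive count fails, and the point is that the map $M\mapsto K(M)$ on $\calW$ is \emph{not} generic: the entries we "lose" in passing from $\calV$ to $\calW$ to $K(\calW)$ are the first column (forced to $e_1$) and the first row above the diagonal, and there is slack in how these interact with $V^\bot$. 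The plan is: choose the conjugating matrix $P$ not just to secure (i) but to simultaneously place a convenient nonzero vector of $V^\bot$ (or several) in a position that forces $K(\calW)$ to be large. Concretely, I would pick a nonzero $N_0\in V^\bot$ and, by conjugation, normalize it; then express $\codim K(\calW)$ as the dimension of the image in $\Mat_{n-1}(\K)^\bot$ of the restriction of $V^\bot$ to the appropriate quotient, and show this image has dimension $\le c-1$ for a good choice of $P$, giving $\codim K(\calW)\le c-1\le n-3<n-2$. The bookkeeping here is where the argument lives.

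The exceptional case should emerge exactly as the obstruction to that last step: the gain of "$-1$" in $\codim K(\calW)\le c-1$ requires finding a nonzero element of $V^\bot$ that, after conjugation, lies entirely in the first row or first column (so that restricting to $\calW$ and then to $K$ kills at least one independent constraint). When $n=3$ and $V^\bot$ is one-dimensional, spanned by a single matrix $N_0$, this is possible unless $N_0$ is, up to conjugation, rigid — and the matrix $N_0=I_3$ (i.e.\ $\calV=\{\tr M=a\}$) is precisely the case where $N_0$ cannot be conjugated to have a zero first column or first row, since every conjugate of $I_3$ is $I_3$ itself. So I would: (1) reduce to $c\ge 1$ (if $V^\bot=\{0\}$ then $V=\Mat_n(\K)$ and everything is trivial); (2) show that if some nonzero $N_0\in V^\bot$ is \emph{not} a scalar multiple of $I_n$, then $N_0$ can be conjugated to a matrix with at least one zero off-diagonal column entry in a useful spot, and push through the improved count to get (i)+(ii); (3) handle the remaining case where $V^\bot\subseteq\Vect(I_n)$, which forces $V\supseteq\frak{sl}_n(\K)$ and hence (since $c<n-1\le n^2-1$ for $n\ge 2$, so $c\le 1$) either $V=\Mat_n(\K)$ or $V=\frak{sl}_n(\K)$ with $c=1$; the latter needs $c=1<n-1$, i.e.\ $n\ge 3$, and when $n\ge 4$ one checks directly that a suitable permutation conjugate of $\frak{sl}_n(\K)$ satisfies (i)+(ii) (the single trace constraint does not obstruct surjectivity of $C_1$ on $\calV$ nor blow up $\codim K(\calW)$ beyond $1$), leaving only $n=3$, $\calV=\{\tr M=a\}$ as genuinely bad.

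The main obstacle I anticipate is step (2): getting the sharp bound $\codim_{\Mat_{n-1}(\K)}K(\calW)\le c-1$ requires a careful choice of conjugating matrix $P$ that threads the needle between securing surjectivity of $(C_1)_{|\calV}$ and ensuring the "collapse" of a constraint when restricting from $V^\bot$ to the $(n-1)\times(n-1)$ block — essentially a simultaneous normalization of a nonzero, non-scalar element of $V^\bot$ together with the flag $(e_1)\subset\K^n$. I expect this to be a short but delicate linear-algebra argument, organized around the case analysis on the rational canonical form of a chosen $N_0\in V^\bot\setminus\Vect(I_n)$ (whether it has an eigenvector, whether it is cyclic, etc.), mirroring the style of the centralizer computation already used in the proof of Proposition \ref{bracket}.
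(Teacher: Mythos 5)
Your proposal correctly identifies the endgame dichotomy ($V\supseteq\frak{sl}_n(\K)$ versus not, with the trace hyperplane of $\Mat_3(\K)$ as the only survivor of the first branch), but the heart of the argument --- your step (2) --- is only announced, not carried out, and the sketch you give of it rests on two misreadings. First, the arithmetic: the rank theorem applied to $K$ restricted to $\calW$ gives
$$\codim_{\Mat_{n-1}(\K)}K(\calW)=\codim_{\Mat_n(\K)}\calV-(n-1)+\dim L(H)\;\leq\;\codim\calV\;\leq\;n-2,$$
not your bound $2n-1-c$; the naive count is off by exactly one, not ``far too weak,'' and this rigidity is the whole point: given (i), condition (ii) can fail only when $\codim\calV=n-2$ and $\dim L(H)=n-1$, i.e.\ when $V$ contains all of $\Vect(E_{1,2},\dots,E_{1,n})$, that is, every trace-zero matrix with image $\Vect(e_1)$. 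Second, and more seriously, you characterize the obstruction to (ii) as ``a nonzero element of $V^\bot$ that, after conjugation, lies entirely in the first row or first column.'' That is the obstruction to (i) (a nonzero element of $V^\bot$ with image $\Vect(e_1)$). The obstruction to (ii) is a condition on $V$ itself (equivalently, that every element of $V^\bot$ maps $e_1$ into $\Vect(e_1)$), and since $\dim V^\bot$ can be as large as $n-2$, both obstructions involve all of $V^\bot$ simultaneously; your plan of normalizing the rational canonical form of a single non-scalar $N_0\in V^\bot$ cannot by itself secure (i) --- which requires that \emph{no} element of $V^\bot$ have image $\Vect(e_1)$ after conjugation --- let alone (ii).

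The paper closes this gap by a global argument rather than a normal-form one: assuming no conjugate works, it deduces that for every line $D\subset\K^n$ not contained in the span $F$ of the images of the rank-one elements of $V^\bot$ (a subspace of dimension at most $\dim V^\bot\leq n-2$), $V$ contains every trace-zero endomorphism with image $D$; it then shows that these endomorphisms already span $\frak{sl}_n(\K)$ (using $\codim F\geq 2$ to move any rank-one trace-zero map off $F$ by a suitable basis), forcing $\frak{sl}_n(\K)\subseteq V$ and hence the exceptional case. To salvage your approach you would need to replace ``normalize $N_0$'' by an argument producing a single line $D$ avoiding both obstructions at once, which is essentially the paper's count of $F$; as written, the proposal has a genuine gap at its central step.
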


\noindent Before proving this, we must analyze condition (i)
in terms of the structure of $V^\bot$, where $V$ denotes the translation vector space of $\calV$.
For $M \mapsto C_1(M)$ not to be onto from $\calV$, it is necessary and sufficient for it
not to be onto from $V$, which is equivalent to the existence of a non-zero row matrix
$L \in \Mat_{1,n}(\K)$ such that $\begin{bmatrix}
L \\
0
\end{bmatrix} \in V^\bot$.
Hence (i) holds if and only if no matrix $A$ in $V^\bot$
satisfies $\im A=\Vect(e_1)$.

\noindent Assume now that condition (i) holds. The rank theorem shows:
$$\codim_{\Mat_{n-1}(\K)} K(\calW) \leq \codim_{\Mat_n(\K)} \calV< n-1.$$
If (ii) does not hold, then the rank theorem shows that
$\codim_{\Mat_n(\K)} \calV=n-2$ and $\dim L(H)=n-1$, hence $L(H)=\Mat_{1,n-1}(\K)$:
it would follow that $V$ contains every
matrix $A \in \frak{sl}_n(\K)$ such that $\im A=\Vect(e_1)$.

\noindent We deduce that conditions (i) and (ii) hold in the case
$V^\bot$ contains no rank $1$ matrix with image $\Vect(e_1)$ \emph{and}
$V$ does not contain every matrix $A \in \frak{sl}_n(\K)$ with image $\Vect(e_1)$.
With that in mind, we may now prove Proposition \ref{alwaysgood}.

\begin{proof}[Proof of Proposition \ref{alwaysgood}]
We reason in terms of linear operators.
We use the canonical basis to identify $\calV$ with an affine space of linear endomorphisms of $\K^n$.
The symmetric bilinear form $(A,B) \mapsto \tr(AB)$ on $\Mat_n(\K)$ then corresponds to
$(u,v) \mapsto \tr(u \circ v)$. \\
We assume there is no $P \in \GL_n(\K)$ such that
$P\,\calV\,P^{-1}$ satisfies conditions (i) and (ii) of Section \ref{goodcase}.
By the above remarks, this shows that for every 1-dimensional linear subspace $D \subset \K^n$
for which $V^\bot$ contains no endomorphism with image $D$,
one has $u \in V$ for every $u \in \frak{sl}(\K^n)$ such that $\im u=D$. \\
We then wish to show that $V$ contains every trace $0$ endomorphism.
\begin{itemize}
\item Consider the linear subspace $U$ of $V^\bot$ spanned by its rank $1$ endomorphisms.
In $U$, we choose a basis $(u_1,\dots,u_r)$ consisting of rank $1$ endomorphisms,
and we set $F:=\im u_1+\cdots+\im u_r \subset \K^n$.
Then every rank $1$ element in $V^\bot$ has its image included in $F$ and
$$\dim F \leq r \leq \dim V^\bot \leq n-2.$$
\item It follows that $V$ contains every $u \in \frak{sl}(\K^n)$ such that
$\rk u=1$ and $\im u \not\subset F$. We will let $\calB$ denote the set of those endomorphisms.
\item Notice that the set of rank $1$ endomorphisms of $\K^n$ with trace $0$
spans $\bigl\{u \in \calL(\K^n) :\; \tr u=0\}$: it suffices to consider the
matrices $E_{i,j}$ and $E_{j,i}$, for $1 \leq i<j \leq n$, and the matrices
$E_{1,1}+E_{k,1}-E_{1,k}-E_{k,k}$, for $2 \leq k \leq n$.
\item We finish by proving that every $u \in \calL(\K^n)$ with rank $1$ and trace $0$
is a linear combination of elements of $\calB$. Set $u \in \calL(\K^n)$ such that
$\rk u=1$, $\tr u=0$ and $\im u \subset F$.
Choose $x_1 \in \im u \setminus \{0\}$.
Since $\codim F \geq 2$, we may choose $x_2 \in E \setminus (F \cup \Ker u)$ and
then $x_3 \in E$ such that $\Vect(x_2,x_3) \cap F=\{0\}$.
We finally extend $(x_1,x_2,x_3)$ into a basis $\bfB$ of $\K^n$ using vectors of $\Ker u$. \\
Then there is a matrix $A \in \Mat_3(\K)$, of the form $A=\begin{bmatrix}
0 & L \\
0 & 0
\end{bmatrix}$ for some $L \in \Mat_{1,2}(\K) \setminus \{0\}$, such that
$$M_\bfB(u)=\begin{bmatrix}
A & 0 \\
0 & 0
\end{bmatrix}.$$
Since $\Vect(x_1,x_2,x_3) \cap F=\Vect(x_1)$, we deduce: for every
$A_1 \in \frak{sl}_3(\K)$ such that $\rk A_1=1$ and $\im A_1 \neq \Vect \begin{bmatrix}
1 & 0 & 0\end{bmatrix}^T$,
there is some $v \in \calB$ such that
$M_\bfB(v)=\begin{bmatrix}
A_1 & 0 \\
0 & 0
\end{bmatrix}$. In order to conclude, it thus suffices to solve the case $n=3$. \\
By a change of basis, it suffices to prove that the vector space
$\frak{sl}_3(\K)$ is spanned by its rank $1$ matrices
whose image is different from $\Vect \begin{bmatrix}1 & 1 & 1\end{bmatrix}^T$. This is obvious using the family from the preceding
bullet-point.
\end{itemize}
Finally, we have shown that $\frak{sl}_n(\K) \subset V$. If
$V=\Mat_n(\K)$, then conditions (i) and (ii) of Section \ref{goodcase} obviously hold.
If not, one has $\frak{sl}_n(\K)=V$ thus $\calV=\bigl\{M \in \Mat_n(\K): \; \tr M=a\bigr\}$
for some $a \in \K$. Then condition (i) is clearly satisfied by $\calV$,
and since (ii) is not, one has $\codim_{\Mat_n(\K)}\calV=n-2$
(see the remarks above the present proof). Since $\calV$ is a hyperplane of $\Mat_n(\K)$, we finally deduce that $n=3$.
\end{proof}

\subsection{The exceptional case}

Combining Proposition \ref{alwaysgood}
with the arguments from Sections \ref{singular} and \ref{goodcase}, it is clear that
our proof of Theorem \ref{prodallaffine} will be complete when the following result will be established:

\begin{prop}
Let $a \in \K$ and set $\calH:=\bigl\{M \in \Mat_3(\K) : \; \tr M=a\bigr\}$.
Then $\GL_3(\K) \subset \calH^{(\infty)}$.
\end{prop}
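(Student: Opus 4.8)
The plan is to use the key (and easily overlooked) fact that, since the trace is invariant under conjugation, the affine hyperplane $\calH$ is stable under conjugation by $\GL_3(\K)$; consequently $\calH^{(\infty)}$ is a subsemigroup of $\bigl(\Mat_3(\K),\times\bigr)$ that is stable under conjugation. This lets me reduce the whole statement to the single assertion that $\calH^{(\infty)}$ contains \emph{some} non-trivial transvection $I_3+tE_{i,j}$ ($i\neq j$, $t\in\K^\times$). Indeed, all non-trivial transvections of $\GL_3(\K)$ are conjugate to one another, so if one of them lies in $\calH^{(\infty)}$ then all of them do; since $\SL_3(\K)$ is generated by elementary transvections (over an arbitrary field), and each such transvection has its inverse again of that form, it follows that $\SL_3(\K)\subseteq\calH^{(\infty)}$. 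Finally, for $d\in\K^\times$ the companion matrix $C_d:=\begin{bmatrix}0&0&d\\ 1&0&0\\ 0&1&a\end{bmatrix}$ has trace $a$ and determinant $d$, hence lies in $\calH\cap\GL_3(\K)$; given an arbitrary $M\in\GL_3(\K)$ and putting $d:=\det M$, the matrix $MC_d^{-1}$ lies in $\SL_3(\K)\subseteq\calH^{(\infty)}$, whence $M=(MC_d^{-1})\,C_d\in\calH^{(\infty)}$.

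It thus remains to exhibit a non-trivial transvection inside $\calH^{(\infty)}$, and I would do this with a mere product of two elements of $\calH$. Observe that if $A\in\GL_3(\K)$ satisfies $\tr A=a$ and $\tr(A^{-1})+(A^{-1})_{2,1}=a$, then $B:=A^{-1}(I_3+E_{1,2})$ is invertible with $\tr B=\tr(A^{-1})+\tr(A^{-1}E_{1,2})=\tr(A^{-1})+(A^{-1})_{2,1}=a$, so that $I_3+E_{1,2}=AB\in\calH^{(2)}\subseteq\calH^{(\infty)}$. To produce such an $A$, I would search among the block-diagonal matrices $A=\begin{bmatrix}\lambda&0&0\\ \mu&\nu&0\\ 0&0&s\end{bmatrix}$ with $\lambda,\nu,s\in\K^\times$ and $\lambda+\nu+s=a$: then $\tr A=a$ automatically, $A^{-1}$ has the same block shape with diagonal $(\lambda^{-1},\nu^{-1},s^{-1})$ and $(2,1)$-entry $-\mu(\lambda\nu)^{-1}$, so the second requirement becomes the single scalar equation $\lambda^{-1}+\nu^{-1}+s^{-1}-\mu(\lambda\nu)^{-1}=a$, which one simply solves for $\mu$. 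Hence any choice of $\lambda,\nu\in\K^\times$ with $\lambda+\nu\neq a$ (so that $s:=a-\lambda-\nu\in\K^\times$) works, and such a choice exists for every field $\K$ except when $\K=\F_2$ and $a=0$ (for $\lvert\K\rvert\geq 3$ one fixes $\lambda\in\K^\times$ and varies $\nu$; over $\F_2$ one needs $a\neq 0$). In that sole remaining case $\calH=\frak{sl}_3(\F_2)$, and I would finish by the ad hoc check that the permutation matrix $P:=E_{1,2}+E_{2,3}+E_{3,1}$ and the matrix $P^{-1}(I_3+E_{1,3})$ both have trace $0$ over $\F_2$ (the latter because $\tr P^{-1}=0$ while the $(3,1)$-entry of $P^{-1}$ vanishes), so that $I_3+E_{1,3}=P\cdot\bigl(P^{-1}(I_3+E_{1,3})\bigr)\in\calH^{(2)}$.

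The conceptual heart of the argument — and the step I expect to be the main obstacle to discover — is the first paragraph: noticing that conjugation-stability of $\calH^{(\infty)}$ collapses everything to producing one transvection, and packaging the companion-matrix observation so as to recover an arbitrary determinant. After that, manufacturing the transvection is a short explicit computation, whose only genuine nuisance is the degenerate field $\F_2$ with $a=0$, which must be settled by hand. I would also take care to invoke cleanly the two standard facts used above, each valid over an arbitrary field: that all non-trivial transvections of $\GL_3(\K)$ are conjugate, and that $\SL_3(\K)$ is generated by elementary transvections.
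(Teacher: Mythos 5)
Your proof is correct; I checked the key computations (the trace identity $\tr\bigl(A^{-1}(I_3+E_{1,2})\bigr)=\tr(A^{-1})+(A^{-1})_{2,1}$, the lower-triangular family solving the scalar equation in $\mu$, the companion matrix $C_d$ with $\tr C_d=a$ and $\det C_d=d$, and the $\F_2$, $a=0$ patch) and they all go through. The central insight — that $\calH$, hence $\calH^{(\infty)}$, is stable under conjugation, so one only needs to exhibit well-chosen generators inside $\calH^{(\infty)}$ — is exactly the paper's starting point. Where you diverge is in the choice of generators and the treatment of determinants. The paper splits on the cardinality of $\K$: for $\card\K>2$ it uses the conjugacy classes of the dilatations $\Diag(\lambda,1,1)$, $\lambda\in\K\setminus\{0,1\}$, which generate all of $\GL_3(\K)$ at once (so no separate determinant step is needed), and exhibits each $\Diag(\lambda,1,1)$ as a product of two trace-$a$ matrices; for $\card\K=2$ it falls back on transvections, with two ad hoc products for $a=0$ and $a=1$. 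You instead use transvections uniformly over every field, which forces the extra (but clean) companion-matrix step $M=(MC_{\det M}^{-1})\,C_{\det M}$ to get from $\SL_3(\K)$ to $\GL_3(\K)$; in exchange, your case analysis collapses to the single exceptional pair $(\F_2,a=0)$, and your exceptional computation is essentially the same as the paper's. Neither route is strictly simpler: the paper trades your determinant step for a field-size dichotomy and a small footnote argument that dilatation classes generate $\GL_3$, while yours leans a bit more heavily on the standard facts that $\SL_3(\K)$ is generated by elementary transvections and that all non-trivial transvections are conjugate.
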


\begin{proof}
Notice that $\calH$ is closed under conjugation hence $\calH^{(\infty)}$ also is.
\begin{itemize}
\item Assume first that $\card \K>2$. Then the union of the conjugacy classes of
$\Diag(\lambda,1,1)$ for $\lambda \in \K \setminus \{0,1\}$ generates\footnote{
By \cite{Lang} Proposition 9.1 p.541, it suffices to prove that some transvection matrix
is a product of matrices of the aforementioned set. Choosing $\lambda \in \K \setminus \{0,1\}$, we see that
$\begin{bmatrix}
1 & 1 & 0 \\
0 & 1 & 0 \\
0 & 0 & 1
\end{bmatrix}=\begin{bmatrix}
\lambda & 1-\lambda & 0 \\
0 & 1 & 0 \\
0 & 0 & 1
\end{bmatrix}\times \begin{bmatrix}
\lambda^{-1} & 1 & 0 \\
0 & 1 & 0 \\
0 & 0 & 1
\end{bmatrix}$ with
$\begin{bmatrix}
\lambda^{-1} & 1 & 0 \\
0 & 1 & 0 \\
0 & 0 & 1
\end{bmatrix} \sim \Diag(\lambda^{-1},1,1)$
and
$\begin{bmatrix}
\lambda & 1-\lambda & 0 \\
0 & 1 & 0 \\
0 & 0 & 1
\end{bmatrix} \sim \Diag(\lambda,1,1)$.}
the group $\GL_3(\K)$.
Notice that this subset is closed under inversion hence every matrix of  $\GL_3(\K)$ is  a product of matrices in this
subset. \\
For every $\lambda \in \K \setminus \{0,1\}$, remark that
$$\begin{bmatrix}
a-1 & 1 & 0 \\
1 & 0 & 0 \\
0 & 0 & 1
\end{bmatrix}\times
\begin{bmatrix}
0 & \lambda & 0 \\
1 & a-1 & 0 \\
0 & 0 & 1
\end{bmatrix}=
\begin{bmatrix}
1 & (\lambda+1)\,(a-1) & 0 \\
0 & \lambda & 0 \\
0 & 0 & 1
\end{bmatrix} \sim \Diag(\lambda,1,1),$$
hence $\Diag(\lambda,1,1)$ belongs to $\calH^{(\infty)}$. This shows $\GL_3(\K) \subset \calH^{(\infty)}$.
\item Assume now $\card \K=2$. Then every matrix of $\GL_3(\K)=\SL_3(\K)$ is a product of matrices all similar to the transvection matrix
$T:=\begin{bmatrix}
1 & 1 & 0 \\
0 & 1 & 0 \\
0 & 0 & 1
\end{bmatrix}$ (see \cite{Lang} Proposition 9.1 p.541). If $a=1$, we then see that
$$T=\begin{bmatrix}
1 & 1 & 0 \\
0 & 1 & 0 \\
0 & 0 & 1
\end{bmatrix} \times \begin{bmatrix}
1 & 0 & 0 \\
0 & 1 & 0 \\
0 & 0 & 1
\end{bmatrix} \in \calH^{(2)}.$$
If $a=0$, we write:
$$T=\begin{bmatrix}
0 & 1 & 1 \\
0 & 0 & 1 \\
1 & 0 & 0
\end{bmatrix}\times \begin{bmatrix}
0 & 0 & 1 \\
1 & 0 & 0 \\
0 & 1 & 0
\end{bmatrix} \in \calH^{(2)}.$$
In any case, we deduce that $\GL_3(\K) \subset \calH^{(\infty)}$.
\end{itemize}
\end{proof}

\noindent This completes the proof of Theorem \ref{prodallaffine} by induction.

\section{Products of two matrices from an hyperplane}\label{hyperplanes}

In this section, we consider a (linear) hyperplane $H$ of $\Mat_n(\K)$. If $n \geq 3$, then
Theorem \ref{prodall} shows that every matrix of $\Mat_n(\K)$ is a product of matrices
from $H$ (possibly with a large number of factors). Here, we will
see that actually two matrices always suffice in the product.
As a warm up, we start by considering the case $n=2$
and by classifying all the counter-examples.

\noindent The following basic lemma of affine geometry will be of constant use:

\begin{lemme}\label{affinelemma}
Let $F$ be a linear hyperplane of a vector space $E$, and
$\calG$ be an affine subspace of $E$ with translation vector space $G$. If $F \cap \calG=\emptyset$, then $G \subset F$.
\end{lemme}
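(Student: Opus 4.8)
The statement is a standard fact: a linear hyperplane $F$ either meets an affine subspace $\calG$ or is "parallel" to it in the sense that the direction space $G$ of $\calG$ is contained in $F$. So I expect a short, self-contained argument, and the only mild subtlety is to handle the contrapositive cleanly without circular reasoning.

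I would argue by contraposition: assume $G \not\subset F$ and produce a point of $F \cap \calG$. Fix any base point $a \in \calG$, so that $\calG = a + G$. Since $G$ is not contained in the hyperplane $F$, pick $g_0 \in G \setminus F$. Because $F$ is a hyperplane, $E = F \oplus \K g_0$, so we may write $a = f + \lambda g_0$ with $f \in F$ and $\lambda \in \K$. Then the vector $f = a - \lambda g_0$ lies in $F$, and it also lies in $\calG$ since $a \in \calG$ and $\lambda g_0 \in G$ (as $g_0 \in G$ and $G$ is a linear subspace). Hence $f \in F \cap \calG$, so $F \cap \calG \neq \emptyset$. Contraposing, $F \cap \calG = \emptyset$ forces $G \subset F$.

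Alternatively, one can phrase it directly: if $F \cap \calG = \emptyset$, suppose for contradiction that some $g_0 \in G$ lies outside $F$; then for a base point $a \in \calG$ the line $a + \K g_0$ is contained in $\calG$, and since it has direction $g_0 \notin F$ it is not parallel to $F$, hence meets $F$, contradicting $F \cap \calG = \emptyset$. Either way the mechanism is the same: a hyperplane complementary direction lets us solve one scalar equation to land inside $F$.

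There is essentially no obstacle here; the "hard part" is merely being careful that $\calG$ is nonempty so that a base point exists — but if $\calG = \emptyset$ the hypothesis $F \cap \calG = \emptyset$ is vacuous while $G$ is (by convention) the zero space, which is trivially contained in $F$, so the statement holds in that degenerate case too. I would just note this briefly and otherwise present the contrapositive argument above.
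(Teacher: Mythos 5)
Your contrapositive argument is correct and is essentially the paper's own proof: both pick a base point $a \in \calG$, use that $F$ together with a direction from $G \setminus F$ spans $E$ (you write $E = F \oplus \K g_0$, the paper writes $F + G = E$), and subtract the $G$-component of $a$ to land in $F \cap \calG$. No issues.
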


\begin{proof}
Assume $G \not\subset F$. Then $F+G=E$ since $F$ is a linear hyperplane of $E$.
Choosing $a \in \calG$ and writing it $a=x+y$ for some $(x,y)\in F \times G$, we then see that
$a-y \in F \cap \calG$, hence $F \cap \calG \neq \emptyset$.
\end{proof}

\subsection{The case $n=2$}

\noindent Here, we prove the following result:

\begin{prop}\label{hypern=2}
Let $H$ be a linear hyperplane of $\Mat_2(\K)$.
Then every matrix of $\Mat_2(\K)$ is a product of two elements of $H$
unless $H$ is conjugate to one of the following hyperplanes
$$H_0:=\Biggl\{\begin{bmatrix}
0 & b \\
a & c
\end{bmatrix} \; \mid \; (a,b,c)\in \K^3\Biggr\}
 \quad \text{and} \quad T_2^+(\K):=\Biggl\{\begin{bmatrix}
a & b \\
0 & c
\end{bmatrix} \; \mid \; (a,b,c)\in \K^3\Biggr\}.
$$
\end{prop}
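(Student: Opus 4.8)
The plan is to reduce everything to the structure of $H$ as an orthogonal hyperplane for the trace form $b$, and then to a question about invertible matrices in small subspaces. Since $b$ is nondegenerate, one writes $H = A^\bot$ for some nonzero $A \in \Mat_2(\K)$, unique up to a nonzero scalar, and conjugating $H$ by $P$ replaces $A$ by $PAP^{-1}$. If $\rk A = 1$ then, up to a nonzero scalar, $A$ is conjugate to $E_{1,1}$ when $\tr A \neq 0$ and to $E_{1,2}$ when $\tr A = 0$; since $E_{1,1}^\bot = H_0$ and $E_{1,2}^\bot = T_2^+(\K)$, in that case $H$ is conjugate to $H_0$ or to $T_2^+(\K)$. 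Otherwise $A \in \GL_2(\K)$, and the crux will be to prove $(A^\bot)^{(2)} = \Mat_2(\K)$ for such $A$. I would also dispatch the two exceptional hyperplanes directly: a product of two upper-triangular matrices is upper-triangular, so $(T_2^+(\K))^{(2)} \subseteq T_2^+(\K) \neq \Mat_2(\K)$; and if $M, N \in H_0$ with $MN \in \GL_2(\K)$ then $M$ and $N$ are invertible, so $\det M = -M_{1,2}M_{2,1} \neq 0$ forces $M_{1,2} \neq 0$ and $M_{2,1} \neq 0$ (likewise for $N$), whence $(MN)_{1,1} = M_{1,2}N_{2,1} \neq 0$; thus $\begin{bmatrix} 0 & 1 \\ 1 & 0\end{bmatrix} \notin H_0^{(2)}$. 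Since $(PHP^{-1})^{(2)} = P\,H^{(2)}\,P^{-1}$, the same conclusions hold for all conjugates of $H_0$ and $T_2^+(\K)$.

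The key auxiliary statement is: \emph{if $A \in \GL_2(\K)$, then every $2$-dimensional linear subspace $P$ of $A^\bot$ contains an invertible matrix}. To see this, observe that $AP = \{AM : M \in P\}$ is a $2$-dimensional subspace of $\frak{sl}_2(\K)$ (because $\tr(AM) = 0$ for $M \in P$), and that $M$ is invertible iff $AM$ is; so it suffices to prove that every $2$-dimensional subspace $Q$ of $\frak{sl}_2(\K)$ contains an invertible matrix — equivalently, that the nilpotent cone $\calN := \{N \in \frak{sl}_2(\K) : \det N = 0\}$ contains no $2$-dimensional linear subspace. Suppose $Q \subseteq \calN$ with $\dim Q = 2$. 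A nonzero $N_1 \in Q$ is a nonzero nilpotent matrix, hence conjugate to $E_{1,2}$; as this is a conjugation-invariant situation we may assume $N_1 = E_{1,2}$. Completing $N_1$ to a basis $(N_1, N_2)$ of $Q$ with $N_2 = \begin{bmatrix} t & u \\ v & -t\end{bmatrix}$, the conditions $N_2 \in \calN$ and $N_1 + N_2 \in \calN$ read $t^2 + uv = 0$ and $-t^2 - (1+u)v = 0$; these give $v = 0$, then $t = 0$, so $N_2 \in \Vect(E_{1,2}) = \Vect(N_1)$, a contradiction. Note that this uses only the three matrices $N_1$, $N_2$, $N_1 + N_2$, hence needs no assumption on the field.

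Now fix $A \in \GL_2(\K)$ and $B \in \Mat_2(\K)$; I must write $B = CD$ with $C, D \in A^\bot$. I would look for an invertible $C \in A^\bot$ and set $D := C^{-1}B$, so that $CD = B$ holds automatically and the only remaining condition is $\tr(AD) = \tr(AC^{-1}B) = 0$. Writing $C^{-1} = (\det C)^{-1}(\tr(C) I_2 - C)$ via Cayley--Hamilton and using cyclicity of the trace, one gets $\tr(AC^{-1}B) = -(\det C)^{-1}\,\tr(CB')$ where $B' := BA - \tr(AB)\,I_2$. Thus it suffices to find an invertible $C \in A^\bot \cap (B')^\bot$. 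If $B' \notin \Vect(A)$ (so in particular $B' \neq 0$), then $A^\bot \cap (B')^\bot$ is a $2$-dimensional subspace of $A^\bot$ and the Lemma supplies such a $C$. If $B' = \mu A$ for some $\mu \in \K$, then $\tr(CB') = \mu\,\tr(CA) = 0$ for every $C \in A^\bot$, so it is enough to pick any invertible $C$ in $A^\bot$, which exists by the Lemma applied to a $2$-dimensional subspace of the $3$-dimensional space $A^\bot$. Either way $B \in (A^\bot)^{(2)}$, with no case distinction on $\rk B$.

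Putting the pieces together: a hyperplane $H$ of $\Mat_2(\K)$ not conjugate to $H_0$ or to $T_2^+(\K)$ is of the form $A^\bot$ with $A \in \GL_2(\K)$, hence $H^{(2)} = \Mat_2(\K)$; and $H_0$, $T_2^+(\K)$ and their conjugates do not have this property, by the first step. The step I expect to be the real obstacle is the Lemma — showing that no $2$-plane fits inside the nilpotent cone of $\frak{sl}_2(\K)$ uniformly in the field — together with spotting the Cayley--Hamilton identity $\tr(AC^{-1}B) = -(\det C)^{-1}\tr(CB')$ that turns "product of two matrices of $A^\bot$" into a question about a single invertible matrix in an explicit small subspace; once these are available, the rest is essentially bookkeeping, the subtlest part being the exceptional hyperplane $H_0$, which (unlike $T_2^+(\K)$) does contain invertible matrices, so that one must really inspect an entry of a product.
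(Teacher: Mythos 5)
Your proof is correct, but it takes a genuinely different route from the paper's. The paper splits into cases according to whether the target matrix $M$ is singular or not: for non-singular $M$ it uses the adjugate map to form the hyperplane $V=\{M\,\Com(N)^T \mid N\in H\}$ and argues that if $V\cap H$ were entirely singular then $H$ would contain a $2$-dimensional singular subspace, which it rules out by invoking the classification of such subspaces (Prasolov, Lemma~32.1) to contradict the invertibility of $A\in H^\bot$; for singular $M$ it first finds $C\in H$ with $\Ker C=\Ker M$ and then solves $BC=M$ by the affine-geometry Lemma. You instead handle all $B$ uniformly: the Cayley--Hamilton identity $\tr(AC^{-1}B)=-(\det C)^{-1}\tr(CB')$ with $B'=BA-\tr(AB)I_2$ converts the problem into finding an invertible $C$ in the $2$-plane $A^\bot\cap (B')^\bot$, and your key lemma (no $2$-plane sits inside the nilpotent cone of $\frak{sl}_2(\K)$, hence no $2$-plane of $A^\bot$ is singular when $A$ is invertible) is proved directly from three determinant equations, with no reference to the classification of singular subspaces and no dependence on $\card\K$. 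What your approach buys is the elimination of both the case distinction on $\rk M$ and the external references; what the paper's approach buys is a template that generalizes more readily to $n\ge 3$ (its singular/non-singular dichotomy and the affine Lemma reappear verbatim in Section~4.2), whereas your Cayley--Hamilton reduction is specific to $2\times 2$ matrices. All the computational steps in your argument check out, including the identification $E_{1,1}^\bot=H_0$, $E_{1,2}^\bot=T_2^+(\K)$ and the direct verification that the two exceptional hyperplanes (and their conjugates) fail the conclusion.
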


\begin{Rem}
Since $T_2^+(\K)$ is a strict subalgebra of $\Mat_2(\K)$, it clearly does
not verify the result under scrutiny, and neither does any of its conjugate hyperplanes. \\
On the other hand, the matrix
$A=\begin{bmatrix}
0 & 1 \\
1 & 0
\end{bmatrix}$ cannot be decomposed as $A=BC$ for some pair $(B,C)\in H_0^2$.
If indeed it could, then $C$ would be non-singular, hence
$C^{-1}=\begin{bmatrix}
a & b \\
c & 0
\end{bmatrix}$ for some triple $(a,b,c)\in \K^3$ with $b \neq 0$ and $c \neq 0$,
and equating $B$ with $A\,C^{-1}$ would yield a contradiction (this would mean $B$ has $c \neq 0$
as entry at the $(1,1)$ spot).
\end{Rem}

\begin{proof}[Proof of Proposition \ref{hypern=2}]
We assume $H$ is neither conjugate to $H_0$ nor to $T_2(\K)^+$.
Choose an non-zero matrix $A$ in the line $H^\bot$. Then $A$
is conjugate to neither $\begin{bmatrix}
0 & 1 \\
0 & 0
\end{bmatrix}$ nor to $\begin{bmatrix}
\lambda & 0 \\
0 & 0
\end{bmatrix}$ for some $\lambda \neq 0$. This shows $A$ is non-singular
(if not, then $A$ has rank $1$ hence is conjugate to one of the aforementioned matrices).
We let $M \in \Mat_2(\K) \setminus \{0\}$ and try to decompose $M$ as a product of two matrices in $H$.
\begin{itemize}
\item \emph{The case $M$ is non-singular.} \\
For $N \in \Mat_2(\K)$, we let $\Com(N)$ denote its matrix of cofactors.
The map $N \mapsto \Com(N)$ is a linear automorphism of $\Mat_2(\K)$, hence
$$V:=\Bigl\{M \Com(N)^T\mid N \in H\Bigr\}$$
is a hyperplane of $\Mat_2(\K)$.
If $V \cap H$ contains a non-singular matrix $B$, then we have a matrix $C \in H$
such that $M\,\Com(C)^T=B$, hence $C$ is non-singular and
$M=B\,\bigl(\frac{1}{\det(C)}\cdot C\bigr)$ belongs to $H^{(2)}$. \\
Assume now that all the matrices in $V \cap H$ are singular.
Since $\dim (V \cap H) \geq 2$, we deduce that $H$ contains a two-dimensional singular linear subspace
(i.e.\ one that contains no non-singular matrix). Replacing $H$
with a conjugate hyperplane, we may use Lemma 32.1 of \cite{Prasolov}
and assume, without loss of generality, that $H$ contains one of the planes
$$\Biggl\{\begin{bmatrix}
a & 0 \\
b & 0
\end{bmatrix} \; \mid \; (a,b)\in \K^2\Biggr\}
 \quad \text{or} \quad
\Biggl\{\begin{bmatrix}
a & b \\
0 & 0
\end{bmatrix} \; \mid \; (a,b)\in \K^2\Biggr\}.$$
However, in the first case, the first row of $A$ is zero, and in the second case, the first column of $A$ is zero, contradicting
the non-singularity of $A$. This completes the case $M$ is non-singular.

\item \emph{The case $M$ is singular.} \\
Then $\rk M=1$ and we may choose a non-zero vector $e_1 \in \Ker M$ and extend it into a basis
$(e_1,e_2)$ of $\K^2$.
Since $\{N \in \Mat_2(\K) :\; e_1 \in \Ker N\}$ is a linear plane,
it has a common non-zero matrix $C$ with $H$. \\
We now search for some $B \in H$ satisfying $M=B\,C$. \\
First of all, since $\rk C=\rk M$ and $e_1 \in \Ker C$, there is some $B_0 \in \Mat_2(\K)$ such that
$M=B_0\,C$.
Then $\calP:=\bigl\{B \in \Mat_2(\K) : \; B\,C=M\bigr\}$
is a plane with translation vector space $P:=\bigl\{B \in \Mat_2(\K) : \; B\,C=0\}$. \\
If $\calP \cap H \neq \emptyset$, then we find some $B \in H$ such that $M=B\,C$.
If not, Lemma \ref{affinelemma} would show that $P \subset H$,
which would yield the same contradiction as in the case $M$ is non-singular (we would find that $A$
is singular). This completes the case $M$ is singular.
\end{itemize}
\end{proof}

\subsection{The case $n \geq 3$}

Here, we assume $n \geq 3$, we let $H$ be a linear hyperplane of $\Mat_n(\K)$, and we choose a non-zero matrix $A$
in $H^\bot$. Letting $M \in \Mat_n(\K) \setminus \{0\}$, we try to decompose $M$ as the product of two matrices in $H$.

\subsubsection{The case $M$ is singular}

Up to conjugation by a well-chosen non-singular matrix, we may assume the first row of $A$ is non-zero.
We denote by $(e_1,\dots,e_n)$ the canonical basis of $\K^n$.
The basic idea is to find a matrix $C$ in $H$ with the same kernel as $M$, and then
another $B \in H$ such that $A=B\,C$ (notice the similarity with the case $n=2$).
Set $p:=\rk M$, so that $1 \leq p<n$.

\vskip 2mm
\begin{itemize}
\item The set
$$V:=\bigl\{C \in \Mat_n(\K) : \, \Ker M \subset \Ker C \; \text{and} \; \im C \subset
\Vect(e_2,\dots,e_n)\bigr\}$$ is a linear subspace of $\Mat_n(\K)$ with dimension $(n-1)\,p$,
and $\forall C \in V, \; \rk C \leq p$.
\item It follows that $V \cap H$ has a dimension greater than or equal to $(n-1)p-1$
and $\forall C \in V\cap H, \; \rk C \leq p$.
Notice that $V \cap H$ is naturally isomorphic to a linear subspace of
$\Mat_{n-1,p}(\K)$ (through a rank-preserving map).
If $V \cap H$ contained no rank $p$ matrix, the Flanders-Meshulam theorem \cite{Meshulam} would show that
$\dim (V \cap H) \leq (n-1)(p-1)$.
However, since $n>2$, one has $(n-1)(p-1)<np-p-1$, hence $V \cap H$ contains a rank $p$ matrix $C$.
Therefore, $\rk M=\rk C$ and $\Ker M \subset \Ker C$, thus $\Ker M=\Ker C$
and it follows that $M=B_0\,C$ for some $B_0 \in \Mat_n(\K)$.
\item Define then the affine subspace $\calP:=\bigl\{B \in \Mat_n(\K) : \; B\,C=M\bigr\}$
with translation vector space $P:=\bigl\{B \in \Mat_n(\K) : \; B\,C=0\bigr\}$.
By a \emph{reductio ad absurdum}, let us assume that $\calP \cap H = \emptyset$.
Then Lemma \ref{affinelemma} shows that $P \subset H$.
However, since $\im C \subset \Vect(e_2,\dots,e_n)$,
it would follow that for any $C_1 \in \Mat_{n,1}(\K)$, the matrix
$\begin{bmatrix}
C_1 & 0 & \cdots & 0
\end{bmatrix}$ would belong to $H$. This would entail that the first row of $A$ is zero,
in contradiction with our first assumption. We conclude that $\calP \cap H \neq \emptyset$, which provides some
$B \in H$ such that $M=B\,C$.
\end{itemize}
This shows that $M \in \calH^{(2)}$ whenever $M$ is singular.

\subsubsection{The case $M$ is non-singular}

We will actually prove a somewhat stronger statement:

\begin{prop}\label{inverse}
Let $H_1$ and $H_2$ be two linear hyperplanes of $\Mat_n(\K)$, with $n \geq 3$.
Then there is a non-singular matrix $P \in H_1$ such that $P^{-1} \in H_2$.
\end{prop}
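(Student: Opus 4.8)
The plan is to reduce Proposition \ref{inverse} to a statement about the intersection of an affine subspace with a linear hyperplane, exactly in the spirit of Lemma \ref{affinelemma}. The natural object to introduce is, for a fixed non-singular matrix $Q$, the affine subspace $\calP_Q:=\{N \in H_1 : N \text{ agrees with } Q^{-1} \text{ in some prescribed way}\}$; more precisely, we want to build a non-singular $P \in H_1$ together with a linear constraint that forces $P^{-1} \in H_2$. Writing $A_i$ for a non-zero matrix spanning $H_i^\bot$, the condition $P^{-1} \in H_2$ reads $\tr(A_2 P^{-1})=0$. Since the map $P \mapsto P^{-1}$ is not affine, the trick will be to parametrize non-singular matrices in a neighbourhood of a well-chosen anchor point by an affine family on which $P \mapsto \tr(A_2 P^{-1})$ becomes affine (or at least behaves well), and then apply a dimension count. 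Concretely, I would fix a non-singular $R \in H_1$ (which exists since $H_1$, having codimension $1<n-1$ for $n\geq 3$, is not contained in the singular locus — this follows from Theorem \ref{prod2} or directly from the classical Dieudonné-type results already invoked), and look for $P$ of the form $P=R(I_n+X)$ with $X$ ranging over a suitable linear subspace; for $X$ small in a Zariski sense this is non-singular, and $P^{-1}=(I_n+X)^{-1}R^{-1}$.

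The cleaner route, which I would actually pursue, is the following. The set $\GL_n(\K)\cap H_1$ is a non-empty Zariski-open subset of the hyperplane $H_1$ (non-empty by the remark above), and the map $\iota : \GL_n(\K)\to \GL_n(\K)$, $P\mapsto P^{-1}$, is a morphism of varieties; its image $\iota(\GL_n(\K)\cap H_1)$ is a constructible subset of $\Mat_n(\K)$ of dimension $\dim H_1=n^2-1$. I want to show it is not contained in the hyperplane $H_2$. Equivalently: the polynomial function $P\mapsto \tr(A_2\,\mathrm{Com}(P)^T)=\tr(A_2 P^{-1})\det P$ on $H_1$ — which is a genuine polynomial, namely $P\mapsto\tr(A_2\,\mathrm{Com}(P)^T)$ — is not identically zero on $H_1$. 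So the proposition boils down to:
\begin{center}
the polynomial $P\mapsto \tr\bigl(A_2\,\mathrm{Com}(P)^T\bigr)$ does not vanish identically on the hyperplane $H_1$.
\end{center}
Since $\mathrm{Com}$ is a bijective (polynomial, degree $n-1$) self-map of $\Mat_n(\K)$ sending non-singular matrices to non-singular matrices, and since $\tr(A_2 N)$ defines $H_2^\bot$-orthogonality, vanishing of this polynomial on $H_1$ would mean $\mathrm{Com}(H_1)\subset H_2$ — but $\mathrm{Com}$ is not linear, so this needs an argument, not just a count. I would instead argue by exhibiting, after conjugation, an explicit $P\in H_1$ with $P^{-1}\in H_2$. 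Conjugating both hyperplanes simultaneously by $P\mapsto UPU^{-1}$ changes $A_1,A_2$ by the same conjugation, so we may put $A_1$ in a convenient normal form (a companion-type or Jordan-type form), split into the cases $A_1$ scalar / non-scalar, $A_1$ singular / invertible, and in each case directly produce a two-parameter affine family of non-singular matrices in $H_1$ whose inverses sweep out enough directions to escape $H_2$.

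**The main obstacle** is precisely the non-linearity of inversion: one cannot simply invoke Lemma \ref{affinelemma} because $\{P : P^{-1}\in H_2\}$ is not an affine subspace. The key technical step, and the one I expect to require real work, is constructing inside $H_1$ a family of non-singular matrices rich enough that the set of their inverses is not swallowed by a single hyperplane; the natural candidates are families of the form $\{R + t_1 B_1 + t_2 B_2 : (t_1,t_2)\in\K^2\}$ with $R\in\GL_n(\K)\cap H_1$ and $B_1,B_2\in H_1$ linearly independent, for which $\det(R+t_1B_1+t_2B_2)$ is a non-zero polynomial in $(t_1,t_2)$ and $(R+t_1B_1+t_2B_2)^{-1}$ has entries that are rational functions in $(t_1,t_2)$; one must then check that $\tr(A_2(R+t_1B_1+t_2B_2)^{-1})$ is a non-zero rational function, which over a small field requires choosing $B_1,B_2$ with care (or passing to $\card\K$ large enough via a separate argument, then bootstrapping). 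The hypothesis $n\geq 3$ will enter exactly here: with two spare dimensions in the hyperplane one has enough room to move, whereas for $n=2$ the counterexamples of Proposition \ref{hypern=2} show the statement genuinely fails. I expect the proof to split according to whether $\card\K=2$ (handled by brute force over the finitely many hyperplanes, as in the exceptional-case proof above) and $\card\K>2$ (handled by the rational-function/dimension argument), mirroring the structure already used for Theorem \ref{prodallaffine}.
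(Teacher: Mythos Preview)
Your write-up is a plan rather than a proof, and none of the three approaches you sketch is carried to the point where it works. Let me point at the real gap and then contrast with what the paper does.

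You correctly identify the obstacle: inversion is not affine, so one cannot directly apply Lemma~\ref{affinelemma} to the set $\{P\in H_1:P^{-1}\in H_2\}$. Your proposed workarounds are (a) show the polynomial $P\mapsto\tr(A_2\,\Com(P)^T)$ is not identically zero on $H_1$, which you immediately concede ``needs an argument''; (b) normalize $A_1$ by conjugation and exhibit an explicit $P$, with no details; (c) take a generic two-parameter affine slice $R+t_1B_1+t_2B_2$ in $H_1$ and argue that $\tr(A_2(R+t_1B_1+t_2B_2)^{-1})$ is a non-zero rational function of $(t_1,t_2)$. Approach (c) is the most concrete, but it is still not a proof: you have not explained how to choose $B_1,B_2$ so that this rational function is provably non-zero, and over $\K=\F_2$ a non-zero rational function in two variables can vanish at every $\F_2$-point, so the ``brute force over finitely many hyperplanes'' fallback you mention would in fact be substantial (there are already $2^8$ non-zero matrices $A_1$ for $n=3$).

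The paper's proof sidesteps the non-linearity of inversion by an idea you do not have: it finds an affine family of non-singular matrices in $H_1$ on which inversion \emph{is} affine. Concretely, for fixed $Q\in\GL_{n-1}(\K)$ the map
\[
X\longmapsto f(X):=\begin{bmatrix}1 & X\\ 0 & Q\end{bmatrix}
\quad\text{has}\quad
f(X)^{-1}=\begin{bmatrix}1 & -XQ^{-1}\\ 0 & Q^{-1}\end{bmatrix},
\]
so both $X\mapsto f(X)$ and $X\mapsto f(X)^{-1}$ are affine in $X\in\Mat_{1,n-1}(\K)$. Intersecting the first family with $H_1$ gives an affine subspace of $X$'s (of dimension at least $n-2\geq 1$, this is where $n\geq 3$ enters), and the image of that subspace under $f(\cdot)^{-1}$ is again affine, so Lemma~\ref{affinelemma} applies verbatim. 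Feeding this through a short contradiction argument (varying $Q$ and using conjugation invariance) forces first $A_1$, then $A_2$, to be scalar, i.e.\ $H_1=H_2=\frak{sl}_n(\K)$; an explicit permutation matrix finishes. No field-size case split is needed.

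So the missing idea is precisely the choice of a parametrized family that linearizes inversion; once you see it, the proof is short and uniform in $\K$.
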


\noindent Before proving this, we readily show how this solves our problem.
Since $M$ is non-singular, $M^{-1}\,H$ is a linear hyperplane of $\Mat_n(\K)$.
Applying Proposition \ref{inverse} to the hyperplanes $H$ and $M^{-1}H$ yields a
non-singular matrix $P \in H$ such that $P^{-1}\in M^{-1} H$. Therefore
$P^{-1}=M^{-1}C$ for some $C \in H$, which shows $M=C\,P \in H^{(2)}$.

\begin{proof}[Proof of Proposition \ref{inverse}]
We will use a \emph{reductio ad absurdum} by assuming that no non-singular matrix $P \in H_1$ satisfies $P^{-1} \in H_2$. \\
Choose $A_1$ and $A_2$ respectively in $H_1^\bot\setminus \{0\}$ and $H_2^\bot\setminus \{0\}$.
We will use the block decompositions:
$$A_1=\begin{bmatrix}
\alpha & L_1 \\
C_1 & M_1
\end{bmatrix} \quad \text{and} \quad A_2=\begin{bmatrix}
\beta & L_2 \\
C_2 & M_2
\end{bmatrix}$$
where $(\alpha,\beta)\in \K^2$, $(L_1,L_2)\in \Mat_{1,n-1}(\K)^2$, $(C_1,C_2)\in \Mat_{n-1,1}(\K)^2$ and
$(M_1,M_2)\in \Mat_{n-1}(\K)^2$.

\vskip 2mm
To start with :
\begin{center}
We assume $C_1 \neq 0$.
\end{center}
We will then prove that $C_2=0$ and $M_2=0$. \\
Let $Q \in \GL_{n-1}(\K)$. For $X \in \Mat_{1,n-1}(\K)$, set
$$f(X):=\begin{bmatrix}
1 & X \\
0 & Q
\end{bmatrix} \in \GL_n(\K),$$
the inverse of which is
$$f(X)^{-1}=\begin{bmatrix}
1 & -XQ^{-1} \\
0 & Q^{-1}
\end{bmatrix}.$$
Since $C_1 \neq 0$ and $n \geq 3$, there exists $X_0 \in \Mat_{1,n-1}(\K) \setminus \{0\}$ such that
$f(X_0) \in H_1$.
Set then $F:=\bigl\{X \in \Mat_{1,n-1}(\K) : \;  X C_1=0\}$, so that
$f(X_0+X)\in H_1$ for every $X \in F$.
Then $\calG:=\bigl\{f(X_0+X)^{-1} \mid X \in F\bigr\}$ is an affine subspace of $\Mat_n(\K)$
with translation vector space
$$\Biggl\{\begin{bmatrix}
0 & -XQ^{-1} \\
0 & 0
\end{bmatrix} \mid X \in F\Biggr\}.$$
By our initial assumption, one must have $\calG \cap H_2 =\emptyset$,
hence Lemma \ref{affinelemma} shows that the translation vector space of $\calG$ is included in $H_2$, which proves
$$\forall X \in \Mat_{1,n-1}(\K), \; XC_1=0 \Rightarrow XQ^{-1}C_2=0.$$
Since this holds for every non-singular $Q$, since $\GL_{n-1}(\K)$ acts transitively on
$\Mat_{n-1,1}(\K) \setminus \{0\}$, and $F \neq \{0\}$ (because $C_1\neq 0$ and $n \geq 3$), we deduce that
$$C_2=0.$$
We now assume $M_2 \neq 0$ and prove that it leads to a contradiction.
The matrix $Q$ may now be chosen such that $f(0)^{-1} \in H_2$.
Indeed, by Dieudonn\'e's theorem for affine subspaces \cite{Dieudonne}, the hyperplane of $\Mat_{n-1}(\K)$ defined by the equation $\tr(M_2\,N)=-\beta$ contains a non-singular matrix, and it suffices to choose $Q$ as its inverse.
Since $C_2=0$, we now have $f(X_0)^{-1} \in H_2$ which is a contradiction
because $f(X_0)\in H_1$. We have thus proven:
$$M_2=0.$$
Let us sum up:
\begin{center}
If $e_1$ is not an eigenvector of $A_1$, then $\im A_2 \subset \Vect(e_1)$.
\end{center}
Since the assumptions are unaltered by simultaneously conjugating $H_1$ and $H_2$ by an arbitrary non-singular matrix, we deduce:
\begin{center}
For every non-zero vector $x \in \K^n$ which is not an eigenvector of $A_1$, one has
$\im A_2 \subset \Vect(x)$.
\end{center}
However $A_2 \neq 0$. It follows that, given two linearly independent vectors of $\K^n$,
one must be an eigenvector of $A_1$. Obviously, this shows that $A_1$ is diagonalisable.
Assume now that $A_1$ is not a scalar multiple of $I_n$.
\begin{itemize}
\item If $\card \K \geq 3$, then we may choose eigenvectors
$x$ and $y$ of $A_1$ associated to distinct eigenvalues, choose $\lambda \in \K \setminus \{0,1\}$,
and notice that the vectors $x+y$ and $x+\lambda.y$ are linearly independent although none is an eigenvector of $A_1$.
\item Assume now $\card \K=2$ and choose a linearly independent triple $(x,y,z)$ and a pair $(\lambda,\mu)\in \K^2$ of distinct scalars such that $x,y,z$ are eigenvectors of $A_1$ respectively associated to the eigenvalues $\lambda,\lambda,\mu$:
then $x+z$ and $y+z$ are linearly independent and none is an eigenvector of $A_1$.
\end{itemize}
We deduce that $A_1$ is a scalar multiple of $I_n$. Since the pair $(A_2,A_1)$ satisfies the same assumptions as $(A_1,A_2)$,
we also find that $A_2$ is a scalar multiple of $I_n$, hence $H_1=H_2=\frak{sl}_n(\K)$.
Finally, the permutation matrix $P:=E_{1,n}+\sum_{j=1}^{n-1} E_{j+1,j}$ belongs to
$\frak{sl}_n(\K)$, and so does its inverse $P^T$. This is the final contradiction, which proves our claim.
\end{proof}
\noindent This completes our proof of Theorem \ref{prod2}.

\noindent The reader will check that the preceding arguments may be generalized effortlessly
so as to yield:

\begin{theo}
Let $n \geq 3$ be an integer, and $H_1$ and $H_2$ be two linear hyperplanes of $\Mat_n(\K)$.
Then every $A \in \Mat_n(\K)$ splits as $A=B\,C$ for some $(B,C)\in H_1 \times H_2$.
\end{theo}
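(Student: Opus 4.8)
The plan is to reproduce, almost word for word, the two-case analysis used in the proof of Theorem \ref{prod2}, simply carrying the second hyperplane along. We may assume $A \neq 0$, since $0 \in H_1 \cap H_2$ already gives $0 = 0 \cdot 0$. Fix non-zero matrices $A_1 \in H_1^\bot$ and $A_2 \in H_2^\bot$. Since simultaneously conjugating $H_1$, $H_2$ and $A$ by one non-singular matrix changes neither the hypotheses nor the conclusion, and since $A_1 \neq 0$, we may normalise so that the first row of $A_1$ is non-zero; this puts no constraint whatsoever on $A_2$.

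First suppose $A$ is singular, and set $p := \rk A$, so that $1 \le p < n$. Exactly as in the singular case of Theorem \ref{prod2}, I would introduce
$$V := \bigl\{C \in \Mat_n(\K) : \; \Ker A \subset \Ker C \; \text{and} \; \im C \subset \Vect(e_2,\dots,e_n)\bigr\},$$
a linear subspace of dimension $(n-1)p$, all of whose elements have rank at most $p$, which embeds through a rank-preserving linear map into $\Mat_{n-1,p}(\K)$. Then $\dim(V \cap H_2) \ge (n-1)p - 1 > (n-1)(p-1)$ because $n > 2$, so the Flanders--Meshulam theorem guarantees a rank $p$ matrix $C \in V \cap H_2$. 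From $\Ker A \subset \Ker C$ and $\rk C = p = \rk A$ we get $\Ker C = \Ker A$, hence $A = B_0 C$ for some $B_0 \in \Mat_n(\K)$. The affine subspace $\calP := \{B \in \Mat_n(\K) : BC = A\}$ is then non-empty, with translation vector space $P := \{B : BC = 0\}$. If $\calP \cap H_1 = \emptyset$, Lemma \ref{affinelemma} would force $P \subset H_1$; but the first row of $C$ is zero (because $\im C \subset \Vect(e_2,\dots,e_n)$), so every matrix of the form $C_1 e_1^T$ with $C_1 \in \Mat_{n,1}(\K)$ lies in $P$, hence in $H_1$, which would make the first row of $A_1$ vanish --- a contradiction. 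So $\calP \cap H_1 \neq \emptyset$, which yields $B \in H_1$ with $BC = A$, i.e.\ $A = BC$ with $(B,C) \in H_1 \times H_2$.

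For $A$ non-singular, the key observation is that Proposition \ref{inverse} is already stated for an arbitrary pair of hyperplanes, and that $A = BC$ with $B \in H_1$ is equivalent to $B^{-1} = C A^{-1}$ with $C \in H_2$, i.e.\ to $B^{-1} \in H_2 A^{-1}$. Since $H_2 A^{-1}$ is again a linear hyperplane of $\Mat_n(\K)$, applying Proposition \ref{inverse} to the pair $(H_1, H_2 A^{-1})$ produces a non-singular $B \in H_1$ with $B^{-1} \in H_2 A^{-1}$; writing $B^{-1} = C A^{-1}$ with $C \in H_2$ gives $A = BC$.

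I do not expect a genuine obstacle, which matches the paper's own remark that the generalisation is effortless. The only two points worth a moment of care are: checking that the conjugation normalising $A_1$ is legitimate and harmless for $A_2$ (it is, since the two conjugations are performed by the same matrix and the normalisation only restricts $A_1$), and recognising that the non-singular case requires no strengthening of Proposition \ref{inverse} --- it suffices to feed it the hyperplane $H_2 A^{-1}$ in place of $H_2$.
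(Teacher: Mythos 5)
Your proposal is correct and is exactly the generalisation the paper has in mind when it says the preceding arguments extend "effortlessly": you rerun the singular-case argument with Flanders--Meshulam applied to $V \cap H_2$ and Lemma \ref{affinelemma} applied to $H_1$ (with the harmless normalisation on $A_1$ only), and you feed the pair $(H_1, H_2A^{-1})$ into Proposition \ref{inverse}, which is already stated for two distinct hyperplanes. Nothing is missing.
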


\end{document}